\theoremstyle{plain}
\newtheorem{thm}{Theorem}[section]
\theoremstyle{definition}
\newtheorem{defn}[thm]{Definition}
\theoremstyle{plain}
\newtheorem{thms}{Theorem}[subsection]
\newtheorem{props}[thms]{Proposition}
\newtheorem{lems}[thms]{Lemma}
\theoremstyle{definition}
\newtheorem{rems}[thms]{Remark}
\numberwithin{equation}{section}
\begin{document}

\title[On the determination of the Singer transfer]
{On the determination of the Singer transfer}

 \author{Nguy\~\ecircumflex n Sum} 

\address{Department of Mathematics, Quy Nh\ohorn n University, 170 An D\uhorn \ohorn ng V\uhorn \ohorn ng, Quy Nh\ohorn n, B\`inh \DJ \d inh, Viet Nam}

\email{nguyensum@qnu.edu.vn }

\thanks{This research is funded by Vietnam National Foundation for Science and Technology Development (NAFOSTED) under grant number 101.04-2017.05.}

\subjclass[2010]{Primary 55T15; Secondary 55S10, 55S05}


\keywords{Steenrod algebra, algebraic transfer, polynomial algebra}

\begin{abstract}
Let $P_k$ be the graded polynomial algebra $\mathbb F_2[x_1,x_2,\ldots ,x_k]$ with the degree of each generator $x_i$
being 1, where $\mathbb F_2$ denote the prime field of two elements, and let $GL_k$ be the general linear group over $\mathbb F_2$ which acts regularly on $P_k$.

We study the algebraic transfer constructed by Singer \cite{si1} using the technique of the \textit{Peterson hit problem}. This transfer is a homomorphism from the homology of the mod-2 Steenrod algebra $\mathcal A$, $\text{Tor}^{\mathcal A}_{k,k+d} (\mathbb F_2,\mathbb F_2)$, to the subspace of $\mathbb F_2{\otimes}_{\mathcal A}P_k$ consisting of all the $GL_k$-invariant classes of degree $d$. 

In this paper, by using the results on the Peterson hit problem we present the proof of the fact that the Singer algebraic transfer is an isomorphism for $k \leqslant 3$. This result has been proved by Singer in \cite{si1} for $k \leqslant 2$ and by Boardman in \cite{bo} for $k = 3$. We show that the fourth Singer transfer is also an isomorphism in certain internal degrees. This result is new and it is different from the ones of Bruner, H\`a and H\uhorn ng~\cite{br}, Ch\ohorn n and H\`a~\cite{cha2}, H\`a ~\cite{ha}, H\uhorn ng and Qu\`ynh~ \cite{hq}, Nam~ \cite{na2}. 
\end{abstract}

\maketitle

\section{Introduction}\label{s1} 
\setcounter{equation}{0}

 Denote by $P_k:= \mathbb F_2[x_1,x_2,\ldots ,x_k]$ the polynomial algebra over the field of two elements, $\mathbb F_2$, in $k$ generators $x_1, x_2, \ldots , x_k$, each of degree 1. This algebra arises as the cohomology with coefficients in $\mathbb F_2$ of an elementary abelian 2-group of rank $k$.  Therefore, $P_k$ is a module over the mod-2 Steenrod algebra, $\mathcal A$.
The action of $\mathcal A$ on $P_n$ is determined by the elementary properties of the Steenrod squares $Sq^i$ and subject to the Cartan formula
$Sq^k(fg) = \sum_{i=0}^kSq^i(f)Sq^{k-i}(g),$
for $f, g \in P_k$ (see Steenrod and Epstein~\cite{st}).

The \textit{Peterson hit problem} is to find a minimal generating set for $P_k$ regarded as a module over the  mod-2 Steenrod algebra. Equivalently, this problem is to find a vector space basis for $QP_k := \mathbb F_2 \otimes_{\mathcal A} P_k$ in each degree $d$. Such a basis may be represented by a list of monomials of degree $d$.  It is completely determined for $k \leqslant 4$, unknown in general. 

Let $GL_k$ be the general linear group over the field $\mathbb F_2$. This group acts naturally on $P_k$ by matrix substitution. Since the two actions of $\mathcal A$ and $GL_k$ upon $P_k$ commute with each other, there is an inherited action of $GL_k$ on $QP_k$. 

Denote by $(P_k)_d$ the subspace of $P_k$ consisting of all the homogeneous polynomials of degree $d$ in $P_k$ and by $(QP_k)_d$ the subspace of $QP_k$ consisting of all the classes represented by the elements in $(P_k)_d$. 
In \cite{si1}, Singer defined the algebraic transfer,  which is a homomorphism
$$\varphi_k :\text{Tor}^{\mathcal A}_{k,k+d} (\mathbb F_2,\mathbb F_2) \longrightarrow  (QP_k)_d^{GL_k}$$
from the homology of the Steenrod algebra to the subspace of $(QP_k)_d$ consisting of all the $GL_k$-invariant classes. It is a useful tool in describing the homology groups of the Steenrod algebra, $\text{Tor}^{\mathcal A}_{k,k+d} (\mathbb F_2,\mathbb F_2)$. This transfer was studied by  Boardman~\cite{bo}, Bruner, H\`a and H\uhorn ng~\cite{br}, H\`a ~\cite{ha}, H\uhorn ng ~\cite{hu}, Ch\ohorn n and H\`a~ \cite{cha,cha1,cha2}, Minami ~\cite{mi}, Nam~ \cite{na2}, H\uhorn ng and Qu\`ynh~ \cite{hq}, the present author \cite{su4} and others.

Singer showed in \cite{si1} that $\varphi_k$ is an isomorphism for $k=1,2$. Boardman showed in \cite{bo} that $\varphi_3$ is also an isomorphism.  However, for any $k \geqslant 4$,  $\varphi_k$ is not a monomorphism in infinitely many degrees (see Singer \cite{si1}, Bruner, H\`a and H\uhorn ng~\cite{br}, H\uhorn ng \cite{hu}.) Singer made a conjecture in \cite{si1} that \textit{the algebraic transfer $\varphi_k$ is an epimorphism for any $k \geqslant 0$.} This conjecture is true for $k\leqslant 3$. It can be verified for $k=4$ by using the results in \cite{su2,su5}. The conjecture  for $k\geqslant 5$ is an open problem.

In this paper, by using the results on the Peterson hit problem we present the proof of the fact that the Singer algebraic transfer is an isomorphism for $k \leqslant 3$. Recall that this result has been proved by Singer in \cite{si1} for $k \leqslant 2$ and by Boardman in \cite{bo} for $k = 3$. To prove this result, Boardman \cite{bo} computed the space $QP_3^{GL_3}$ by using a basis consisting of the all the classes represented by certain polynomials in $P_3$. We also compute this space, however we use the admissible monomial basis for $QP_3$ that is different from the one of Boardman in \cite{bo}.  
By applying this technique for $k=4$, we show that the fourth Singer transfer is also an isomorphism in certain internal degrees. This result is new and it is different from the ones of Bruner, H\`a and H\uhorn ng~\cite{br}, Ch\ohorn n and H\`a~\cite{cha2},  H\`a ~\cite{ha}, H\uhorn ng and Qu\`ynh~ \cite{hq}, Nam~ \cite{na2}. In those works it is shown only that the fourth Singer transfer detects certain families of elements in $\text{Ext}_{\mathcal A}^{4,*}(\mathbb F_2,\mathbb F_2)$, and fails to detect others.

\medskip
This paper is organized as follows. In Section \ref{s2}, we recall  some needed information on the lambda algebra and the Singer algebraic transfer.  In Sections \ref{s3}, we present the determination of the algebraic transfer for $k \leqslant 3$. Finally, in Section \ref{s4}, we show that the fourth Singer transfer is an isomorphism in certain internal degrees. 

\section{The Singer algebraic transfer and the lambda algebra}\label{s2}
\setcounter{equation}{0}

First of all, we briefly recall the definition of the Singer transfer. Let $\widehat P_1 $ be the submodule of $\mathbb F_2[x_1,x_1^{-1} ]$ spanned by all powers $x^i_1$ with $i \geqslant -1.$ The usual $\mathcal A$-action on $P_1 = \mathbb F_2[x_1]$ is canonically extended to an $\mathcal A$-action on $\mathbb F_2[x_1,x_1^{-1} ]$ (see Singer \cite{si1}).  $\widehat P_1 $ is an $\mathcal A$-submodule of $\mathbb F_2[x_1,x_1^{-1} ]$. The inclusion $P_1 \subset  \widehat P_1$ gives rise to a short exact sequence of $\mathcal A$-modules:
$$0 \longrightarrow P_1 \longrightarrow \widehat P_1  \longrightarrow \Sigma^{-1}\mathbb F_2 \longrightarrow 0 .$$
Let $e_1$ be the corresponding element in $\text{Ext}_{\mathcal A}^1(\Sigma^{-1}\mathbb F_2,P_1)$. By using the cross and Yoneda products, Singer set 
$$e_k = (e_1\times P_{k-1})\circ(e_1\times P_{k-2})\circ  \ldots (e_1\times P_{1})\circ e_1 \in \text{Ext}^k_{\mathcal A}(\Sigma^{-k}\mathbb F_2,P_k).$$ 
Then, he defined 
$\hat{\varphi_k} :\text{Tor}^{\mathcal A}_k (\mathbb F_2,\Sigma^{-k}\mathbb F_2) \longrightarrow
\text{Tor}^{\mathcal A}_0 (\mathbb F_2,P_k)=QP_k$ 
by $\hat\varphi_k(z) = e_k\cap z$. Its image is a submodule of $(QP_k)^{GL_k}$. So, $\hat\varphi_k$ induces the homomorphism
$$\varphi_k :\text{Tor}^{\mathcal A}_k (\mathbb F_2,\Sigma^{-k}\mathbb F_2) \longrightarrow
QP_k^{GL_k}.$$ 

Denote by $(P_k)^*$ the dual of $P_k$ and by $P((P_k)^*)$ the primitive subspace consisting of all elements in $(P_k)^*$ that are annihilated by every positive degree operations in the mod-2 Steenrod algebra. The dual of $\varphi_k$:
$$Tr_k := (\varphi_k)^*: \mathbb F_2\otimes_{GL_k}P((P_k)^*_d) \longrightarrow \text{\rm Ext}_{\mathcal A}^{k, k+d}(\mathbb F_2, \mathbb F_2)$$
is also called the $k$-th Singer transfer.

The algebra $\text{Ext}_{\mathcal A}^{*,*}(\mathbb F_2,\mathbb F_2)$ is described in terms of the mod-2 lambda algebra $\Lambda$ (see \cite{b6}). Recall that $\Lambda$ is a bigraded differential algebra over $\mathbb F_2$ generated by $\lambda_j \in \Lambda^{1,j}, j \geqslant 0$, with the relations
\begin{equation}\label{cth1}\lambda_j\lambda_{2j+1+m} = \sum_{\nu \geqslant 0}\binom{m-\nu-1}\nu\lambda_{j+m-\nu}\lambda_{2j+1+\nu},
\end{equation}
for $m\geqslant 0$ and the differential
\begin{equation}\label{cth2} \delta(\lambda_i) = \sum_{\nu \geqslant 0}\binom{i-\nu-1}{\nu +1}\lambda_{k-\nu-1}\lambda_{\nu},
\end{equation}
for $i>0$, $\delta(\lambda_0) =0$ and that $H^{k,d}(\Lambda,\delta) = \text{Ext}_{\mathcal A}^{k, k+d}(\mathbb F_2, \mathbb F_2)$.  

For example, the elements $\lambda_{2^i-1}\in \Lambda^{1,2^i-1}, \ i\geqslant 0,$ and $\bar d_0 = \lambda_6\lambda_2\lambda_3^2 + \lambda_4^2\lambda_3^2 + \lambda_2\lambda_4\lambda_5\lambda_3 + \lambda_1\lambda_5\lambda_1\lambda_7\in \Lambda^{4,14}$ are the cycles in the lambda algebra $\Lambda$. So, $h_i =[\lambda_{2^i-1}]$ and $d_0 = [\bar d_0]$ are the elements in $\text{Ext}_{\mathcal A}^{*, *}(\mathbb F_2, \mathbb F_2)$. Note that $h_i$ is  the Adams element in $\text{\rm Ext}_{\mathcal A}^{1,2^i}(\mathbb F_2, \mathbb F_2)$.

There is a homomorphism $\widetilde{Sq}^0 : \Lambda \to \Lambda$ determined by
$$\widetilde{Sq}^0(\lambda_{j_1}\lambda_{j_2}\ldots \lambda_{j_k}) = \lambda_{2j_1+1}\lambda_{2j_2+1}\ldots \lambda_{2j_k+1}, \ k \geqslant 0.$$
This homomorphism respects the relations in \eqref{cth1} and commutes the differential in \eqref{cth2}. Therefore, it induces a homomorphism 
$$Sq^0 :\text{Ext}_{\mathcal A}^{k, k+d}(\mathbb F_2, \mathbb F_2) = H^{k,d}(\Lambda) \longrightarrow H^{k,k+2d}(\Lambda)  = \text{\rm Ext}_{\mathcal A}^{k, 2k+2d}(\mathbb F_2, \mathbb F_2).$$

A family $\{a_i : i \geqslant 0\}$ of elements in $\text{Ext}_{\mathcal A}^{k, k+*}(\mathbb F_2, \mathbb F_2)$
is called a $Sq^0$-family if $a_i =(Sq^0)^i(a_0)$ for every $ i \geqslant 0$. 
It is well known that $\text{Ext}_{\mathcal A}^{3, 3+*}(\mathbb F_2, \mathbb F_2)$ contains the $Sq^0$-family of indecomposable elements $\{c_i\}$ and $\text{Ext}_{\mathcal A}^{4, 4+*}(\mathbb F_2, \mathbb F_2)$  contains seven $Sq^0$-families of indecomposable elements, namely $\{d_i \}, \{e_i\} , \{f_i\} , \{g_{i+1}\} , \{p_i\} , \{D_3(i)\}$, and $\{p'_i\}$.  Note that $\{h_i\}$ is also a $Sq^0$-family in $\text{\rm Ext}_{\mathcal A}^{1,1+*}(\mathbb F_2, \mathbb F_2)$. 

The algebra $\{\text{\rm Ext}_{\mathcal A}^{k, k+*}(\mathbb F_2, \mathbb F_2)|k\geqslant 0\}$ has been explicitly computed by Adem \cite{ade} for $k=1$, by Adams \cite{ada} and Wall \cite{wal} for $k=2$, by Adams \cite{ada} and Wang \cite{wan} for $k=3$ and by Lin \cite{wl} for $k = 4$.

\begin{thm}[See  \cite{ada,ade,wl,wal,wan}] \label{md51}\

{\rm i)} The algebra $\{\text{\rm Ext}_{\mathcal A}^{k, k+*}(\mathbb F_2, \mathbb F_2)| k\geqslant 0\}$ for $k\leqslant 3$ is generated by $h_i$ and $c_i$ for $i \geqslant 0$ and subject only to the relations $h_ih_{i+1} = 0, h_ih_{i+2}^2 = 0$ and $h_i^3= h_{i-1}^2h_{i+1}$. In particular, $\{c_i : i \geqslant 0\}$ is an $\mathbb F_2$-basis for the indecomposable elements in $\text{\rm Ext}_{\mathcal A}^{3, 3+*}(\mathbb F_2, \mathbb F_2)$.

{\rm ii)} The algebra $\{\text{\rm Ext}_{\mathcal A}^{k, k+*}(\mathbb F_2, \mathbb F_2)|k\geqslant 0\}$ for $k\leqslant 4$ is generated by $h_i$, $c_i$, $d_i$, $e_i$, $f_i$, $g_{i+1}$, $p_i$, $D_3(i)$ and $p'_i$ for $i \geqslant 0$ and subject to the relations
in i) together with the relations $h_i^2h_{i+3}^2 = 0$, $h_j c_i = 0$ for $j = i -1, i, i + 2$ and $i +3$. Furthermore, the set of the elements  $d_i , e_i , f_i , g_{i+1}, p_i , D_3(i)$ and $p'_i$, for $i \geqslant 0$, is an $\mathbb F_2$-basis for the indecomposable elements in $\text{\rm Ext}_{\mathcal A}^{4, 4+*}(\mathbb F_2, \mathbb F_2)$.
\end{thm}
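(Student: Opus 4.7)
The plan is to work entirely inside the lambda algebra $\Lambda$ and exploit the fact that $H^{k,d}(\Lambda,\delta) = \text{Ext}_{\mathcal A}^{k,k+d}(\mathbb F_2,\mathbb F_2)$. Fix the admissible monomial basis of $\Lambda$ (sequences $\lambda_{j_1}\cdots \lambda_{j_k}$ with $2 j_s \geqslant j_{s+1}$), together with the straightening rule \eqref{cth1} and the differential \eqref{cth2}. Every computation below is then the mechanical one of listing admissibles in a given bidegree, applying $\delta$, and comparing ranks of the cycle and boundary subspaces. To propagate results in $d$, I would use the doubling homomorphism $\widetilde{Sq}^0$, which is injective on admissibles and commutes with $\delta$, so that each indecomposable generator produces a full $Sq^0$-family.

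For part (i), I would induct on $k$. For $k=1$ the only cycles are the singletons $\lambda_{2^i-1}$, giving $h_i = [\lambda_{2^i-1}]$. For $k=2$ a direct inspection of $\delta$ on admissibles $\lambda_i\lambda_j$ produces precisely the relations $h_i h_{i+1}=0$ and the exchange $h_i^3 = h_{i-1}^2 h_{i+1}$, and no new indecomposables. For $k=3$ I would enumerate admissible triples, apply $\delta$, separate decomposables (products of $h$'s, which account for the remaining relation $h_i h_{i+2}^2 = 0$) from indecomposables, and identify the indecomposable cycles as $\widetilde{Sq}^0$-translates of a chosen representative of $c_0$. The minimality of the relation set then follows by comparing total dimensions of the presented algebra with the dimensions of $H^{3,*}(\Lambda)$ in each internal degree.

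Part (ii) follows the same script, but with length-four admissibles, which is where the real work lies. One computes the boundary map on admissible quadruples, splits the kernel into the subspace spanned by the known Massey-type products in $h_i$ and $c_i$ (yielding the extra relations $h_i^2 h_{i+3}^2 = 0$ and $h_j c_i = 0$ for $j\in\{i-1, i, i+2, i+3\}$ by direct application of \eqref{cth1}), and then checks that a complement is spanned by seven $\widetilde{Sq}^0$-orbits, initiated by explicit cocycle representatives of $d_0$, $e_0$, $f_0$, $g_1$, $p_0$, $D_3(0)$, $p'_0$. The ranks needed to confirm that these seven orbits exhaust the indecomposables can be extracted from a degree-by-degree tabulation of $\dim H^{4,d}(\Lambda)$.

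The main obstacle is precisely this length-four enumeration: the admissible count grows rapidly, many of the candidate cycles straighten only after several applications of \eqref{cth1}, and isolating the "exotic" generators $D_3(i)$ and $p'_i$ (which are not detected by the classical May spectral sequence in the same way as $d_i,e_i,f_i,g_{i+1},p_i$) requires keeping track of subtle decomposability relations. In practice I would organize the bookkeeping by internal degree modulo powers of two and defer to Lin's tables in \cite{wl} to confirm that no further indecomposable family has been missed, rather than redo every bidegree by hand.
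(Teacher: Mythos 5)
The first thing to note is that the paper does not prove this statement at all: it is quoted verbatim from Lin \cite{wl} (with the $k\leqslant 3$ part going back to Adams and Wang), and the paper only uses it as input. So you are attempting to reprove a deep external computation, and your outline, while pointed in a reasonable direction (the lambda algebra with its admissible basis, the differential \eqref{cth2}, and the doubling map $\widetilde{Sq}^0$ is indeed the natural chain-level setting), has genuine gaps that prevent it from being a proof.

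First, your propagation step is unjustified: the fact that $\widetilde{Sq}^0$ is injective on admissible monomials and commutes with $\delta$ does not imply that the induced map $Sq^0$ on homology is injective, so "each indecomposable generator produces a full $Sq^0$-family" does not follow; one must separately verify that each $(Sq^0)^i(a_0)$ is nonzero and indecomposable, which is part of what Lin actually proves. Second, the theorem is a statement about \emph{every} internal degree (the relations listed are claimed to be the \emph{only} relations, and the seven families are claimed to \emph{exhaust} the indecomposables in $\text{Ext}^{4,*}$), whereas a degree-by-degree enumeration of admissibles only ever verifies finitely many bidegrees. Closing this requires structural input --- vanishing lines, $Sq^0$-periodicity above a certain range, or the May spectral sequence --- none of which appears in your sketch. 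Third, and decisively, your last paragraph concedes that you would "defer to Lin's tables in \cite{wl} to confirm that no further indecomposable family has been missed": at that point you are assuming the conclusion rather than proving it. The honest and standard course here, which the paper takes, is simply to cite Lin; if you want an independent argument you must supply the rank computations and the periodicity argument in full, which is a substantial paper in its own right.
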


It is well known that the dual of $P_k$ is the divided power algebra generated by $a_1,a_2,\ldots ,a_k$:
$$ (P_k)^* = \Gamma(a_1,a_2,\ldots , a_k)$$ 
where $a_j^{(i)}$ is dual to $x_j^i \in P_k$ with respect to the basis of $P_k$ consisting of all monomials in $x_1,x_2,\ldots , x_k$ and $a_j = a_j^{(1)}$. The graded vector space $\{(P_k)^*| k\geqslant 0\}$ is an algebra with a multiplication defined by 
$$(a_1^{(i_1)}\ldots a_k^{(i_k)})(a_1^{(i_{k+1})}\ldots a_m^{(i_{k+m})}) = a_1^{(i_1)}\ldots a_k^{(i_k)}a_{k+1}^{(i_{k+1})}\ldots a_{k+m}^{(i_{k+m})}\in (P_{k+m})^*,$$
for any $a_1^{(i_1)}\ldots a_k^{(i_k)}\in (P_k)^*$ and $a_1^{(i_{k+1})}\ldots a_m^{(i_{k+m})} \in (P_m)^*$.
In \cite{cha}, Ch\ohorn n and H\`a defined a homomorphism of algebras
$$\phi =\{\phi_k|k\geqslant 0\}: \{(P_k)^*| k\geqslant 0\}\longrightarrow \{\Lambda^{k,*}|k\geqslant 0\} = \Lambda,$$
which induces the Singer transfer. Here, the homomorphism $\phi_k: (P_k)^*\to\Lambda^{k,*}$ is defined by the following inductive formula:
$$\phi_k(a^{(I,t)}) = \begin{cases} \lambda_t ,&\text{ if } k-1=\ell(I) =0,\\
\sum_{i\geqslant t}\phi_{k-1}(Sq^{i-t}a^I)\lambda_i, &\text{ if } k-1= \ell(I) >0,
\end{cases}$$
for any $a^{(I,t)} = a_1^{(i_1)}a_2^{(i_2)}\ldots a_{k-1}^{(i_{k-1})}a_k^{(t)}\in (P_k)^*$ and $I=(i_1,i_2,\ldots, i_{k-1})$.
\begin{thm}[See Ch\ohorn n and H\`a \cite{cha}]\label{md52} If $b \in P((P_k)^*)$, then $\phi_k(b)$ is a cycle in the lambda algebra $\Lambda$ and $Tr_k([b]) = [\phi_k(b)]$. 
\end{thm}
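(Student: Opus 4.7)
The plan is to prove both assertions simultaneously by induction on $k$. For the base case $k=1$, the inductive formula gives $\phi_1(a_1^{(t)})=\lambda_t$. The dual $\mathcal A$-action on $(P_1)^\ast$ is $Sq^s(a_1^{(m)})=\binom{m-s}{s}a_1^{(m-s)}$, so $a_1^{(t)}$ is primitive iff $\binom{t-s}{s}\equiv 0\pmod 2$ for every $s\geq 1$; by Lucas this forces $t=2^i-1$, which by formula \eqref{cth2} is exactly the condition for $\delta(\lambda_t)=0$. That $Tr_1([a_1^{(2^i-1)}])=h_i$ is Singer's original identification in \cite{si1}, which handles the first case.

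For the inductive step I would establish the stronger chain-level identity
$$\delta\circ\phi_k \;=\; \phi_k\circ\Bigl(\textstyle\sum_{s>0}Sq^s\Bigr)$$
on all of $(P_k)^\ast$, so that the first assertion follows by restriction to primitives. Starting from $\phi_k(a^{(I,t)})=\sum_{i\geq t}\phi_{k-1}(Sq^{i-t}a^I)\lambda_i$, I would apply Leibniz, use \eqref{cth2} for $\delta(\lambda_i)$ and the inductive hypothesis on $\delta\phi_{k-1}$; then I would regroup the resulting quadratic expression in the $\lambda_j$'s via the defining relation \eqref{cth1} and invoke the dual Cartan formula on $(P_k)^\ast$ (writing $Sq^s(a^I a_k^{(t)})$ as a sum of products $Sq^\alpha(a^I)\cdot Sq^\beta(a_k^{(t)})$) to recognize the right-hand side as $\phi_k\bigl(\sum_{s>0}Sq^s a^{(I,t)}\bigr)$.

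For the identification $Tr_k([b])=[\phi_k(b)]$, I would lift Singer's iterated extension $e_k=(e_1\times P_{k-1})\circ\cdots\circ e_1$ to an explicit cochain in a bicomplex modeling the cobar construction: the chain representative of $e_1$ is the tautological assignment $a_1^{(t)}\mapsto\lambda_t$, and the Yoneda/cross product structure supplies the rest inductively. Unwinding the definition $\varphi_k(z)=e_k\cap z$ and dualizing then gives precisely the inductive formula defining $\phi_k$, so $[\phi_k(b)]=Tr_k([b])$.

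The main technical obstacle is the inductive identity $\delta\phi_k=\phi_k\circ\sum_{s>0}Sq^s$: it reduces to a single mod $2$ binomial identity reconciling the coefficients $\binom{m-\nu-1}{\nu}$ of \eqref{cth1}, the coefficients $\binom{i-\nu-1}{\nu+1}$ of \eqref{cth2}, and those produced by the dual Cartan formula on $(P_k)^\ast$. Once that identity is verified, the remaining work is bookkeeping with the inductive definition of $\phi_k$ and with the standard chain-level model of the Yoneda product in $\Lambda$.
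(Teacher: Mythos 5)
A preliminary remark: the paper itself offers no proof of Theorem \ref{md52} --- it is quoted from Ch\ohorn n--H\`a \cite{cha} --- so your proposal has to be measured against the argument in that reference rather than against anything in this text. Your overall strategy (a chain-level identity proved by induction on $k$, from which the cycle condition follows by restriction to primitives, followed by an identification of $\phi_k$ with a chain-level representative of $e_k\cap(-)$) is the right one, and your base case is correct. But the central identity you propose, $\delta\circ\phi_k=\phi_k\circ\sum_{s>0}Sq^s$, cannot hold for bidegree reasons: $\phi_k$ carries $(P_k)^*_d$ into $\Lambda^{k,d}$ and, by \eqref{cth2}, $\delta$ maps $\Lambda^{k,d}$ to $\Lambda^{k+1,d-1}$, whereas your right-hand side lies in $\bigoplus_{s>0}\Lambda^{k,d-s}$, i.e.\ in homological degree $k$, not $k+1$. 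The identity you actually need --- and the one that \eqref{cth2} itself asserts when $k=1$, upon substituting $s=\nu+1$ --- is
\[
\delta\bigl(\phi_k(b)\bigr)=\sum_{s>0}\phi_k\bigl(Sq^s_*b\bigr)\,\lambda_{s-1},
\]
where $Sq^s_*$ denotes the dual operation on $(P_k)^*$. This corrected form still vanishes on primitives, and it is what propagates through the induction: applying the Leibniz rule to $\phi_k(a^{(I,t)})=\sum_{i\geq t}\phi_{k-1}(Sq^{i-t}_*a^I)\lambda_i$, invoking the inductive hypothesis for $\delta\phi_{k-1}$ and \eqref{cth2} for $\delta(\lambda_i)$, one must commute the trailing factors $\lambda_{s-1}$ past $\lambda_i$ via \eqref{cth1} and reassemble the coefficients using the Cartan formula. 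With the identity as you wrote it, this verification fails at the first degree count.

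The second assertion, $Tr_k([b])=[\phi_k(b)]$, is the harder half, and your sketch defers exactly the step that carries the content. To ``unwind $e_k\cap z$'' one needs a single chain-level model in which both the cross and Yoneda products of the classes $e_1$ and the lambda algebra are simultaneously visible; the standard route (and the one underlying \cite{cha}) is Singer's identification of $\Lambda^k$ with a subquotient of the dual of the localized module $\widehat P_k$, under which $e_k$ is represented by the inclusion $P_k\subset\widehat P_k$ and the cap product dualizes precisely to the inductive formula defining $\phi_k$. Asserting that ``the Yoneda/cross product structure supplies the rest inductively'' names the destination without supplying the comparison of resolutions that gets you there. So: right architecture, but the key chain-level identity is stated incorrectly and the identification with the transfer is a plan rather than an argument.
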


Note that this theorem is a dual version of the one in H\uhorn ng \cite{hu2}.

We end this section by recalling some results on Kameko's homomorphism and the generators of the general linear group $GL_k$.

One of the main tools in the study of the hit problem is  Kameko's homomorphism 
$\widetilde{Sq}^0_*: QP_k \to QP_k$. 
This homomorphism is induced by the $\mathbb F_2$-linear map $\psi:P_k\to P_k$, given by
$$
\psi(x) = 
\begin{cases}y, &\text{if }x=x_1x_2\ldots x_ky^2,\\  
0, & \text{otherwise,} \end{cases}
$$
for any monomial $x \in P_k$. Note that $\psi$ is not an $\mathcal A$-homomorphism. However, 
$\psi Sq^{2t} = Sq^{t}\psi,$ and $\psi Sq^{2t+1} = 0$ for any non-negative integer $t$. 

For a positive integer $n$, by $\mu(n)$ one means the smallest number $r$ for which it is possible to write $n = \sum_{1\leqslant i\leqslant r}(2^{u_i}-1),$ where $u_i >0$.

\begin{thm}[Kameko~\cite{ka}]\label{dlmd2} 
Let $m$ be a positive integer. If $\mu(2m+k)=k$, then 
$(\widetilde{Sq}^0_*)_m: (QP_k)_{2m+k}\to (QP_k)_m$ 
is an isomorphism of the $GL_k$-modules.
\end{thm}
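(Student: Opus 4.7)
The plan is to isolate the easy parts (descent of $\psi$ to a map on $QP_k$, $GL_k$-equivariance, and surjectivity) from the genuinely difficult injectivity, which rests on a single structural lemma about generators of $(QP_k)_{2m+k}$ under the hypothesis $\mu(2m+k)=k$.

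First, if $f\in (P_k)_{2m+k}$ is hit, say $f=\sum_{i>0}Sq^i(f_i)$, then by $\psi Sq^{2t}=Sq^t\psi$ and $\psi Sq^{2t+1}=0$ one obtains $\psi(f)=\sum_{t\geqslant 1}Sq^t\psi(f_{2t})$, which is again hit, so $\psi$ descends to $\widetilde{Sq}^0_*:(QP_k)_{2m+k}\to (QP_k)_m$. Surjectivity is immediate, since every monomial $y$ of degree $m$ equals $\psi(x_1x_2\cdots x_ky^2)$. For $GL_k$-equivariance I would check on the two standard families of generators: permutations commute with $\psi$ by symmetry, while for an elementary transvection one expands by the Cartan formula and identifies the leading term with the expected one, the remaining terms being absorbed using the structural lemma below.

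The central step is the following structural lemma: \emph{if $\mu(n)=k$, every class in $(QP_k)_n$ is represented by a linear combination of monomials of the form $x_1x_2\cdots x_k\cdot z^2$, equivalently, monomials in which every variable appears with odd exponent}. Granting this with $n=2m+k$, injectivity follows: pick $[f]\in\ker\widetilde{Sq}^0_*$ with representative $f=x_1\cdots x_kF^2$, $F\in(P_k)_m$; then $\psi(f)=F$ is hit, say $F=\sum_jSq^{i_j}(G_j)$ with $i_j\geqslant 1$, whence $F^2=\sum_j(Sq^{i_j}G_j)^2$. Applying Cartan to $Sq^{2i_j}(x_1\cdots x_kG_j^2)$, and using that $Sq^b$ vanishes on squares when $b$ is odd, gives
\[
x_1\cdots x_k(Sq^{i_j}G_j)^2=Sq^{2i_j}(x_1\cdots x_kG_j^2)+\sum_{c=1}^{i_j}Sq^{2c}(x_1\cdots x_k)(Sq^{i_j-c}G_j)^2.
\]
The first summand is hit; each $c\geqslant 1$ summand expands via $Sq^{2c}(x_1\cdots x_k)=\sum_{|S|=2c}\prod_{i\in S}x_i^2\prod_{i\notin S}x_i$ into monomials in which at least $2c\geqslant 2$ variables carry an even exponent, hence hit by the structural lemma again. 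Summing over $j$ shows $x_1\cdots x_kF^2$ is hit, so $[f]=0$.

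The principal obstacle is therefore the structural lemma. I would prove it by induction on the weight vector $\omega(z)=(\omega_1(z),\omega_2(z),\ldots)$, where $\omega_s(z)$ counts the variables whose exponent in $z$ has a $1$ in binary position $s$, ordered left-lexicographically, a standard framework in the hit problem. For a monomial $z$ carrying an even positive exponent in some variable, one constructs a predecessor $z'$ and an exponent $2^a$ such that $Sq^{2^a}(z')$ equals $z$ plus terms of strictly smaller $\omega$, thereby reducing $z$ modulo hit. The hypothesis $\mu(n)=k$ enters essentially: it guarantees that every spike of degree $n$ is of the form $\prod x_i^{2^{u_i}-1}$ with exactly $k$ odd exponents, so no spike with an even exponent can obstruct the reduction. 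For $\mu(n)<k$ such spikes exist, the reduction fails, and correspondingly $\widetilde{Sq}^0_*$ is not injective in general. Carrying out this reduction rigorously and controlling the error terms is the main technical difficulty.
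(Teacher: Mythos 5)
The paper does not actually prove this theorem; it is quoted verbatim from Kameko's thesis, so your proposal has to stand on its own. Its architecture (descent of $\psi$, surjectivity via $y=\psi(x_1\cdots x_ky^2)$, equivariance on generators, and reduction of injectivity to a statement about monomials with even exponents) is the standard one, and the Cartan computation for $x_1\cdots x_k(Sq^{i_j}G_j)^2$ is correct. The problem is a mismatch between the structural lemma as you \emph{state} it and as you \emph{use} it. You state the weak form --- every class of $(QP_k)_n$ has a representative that is a sum of all-odd monomials --- but in the injectivity step you need each individual monomial $\prod_{i\in S}x_i^2\prod_{i\notin S}x_i\cdot(Sq^{i_j-c}G_j)^2$, which has only $k-2c<k$ odd exponents, to be \emph{hit}, and you justify this ``by the structural lemma again.'' The weak form does not deliver that: a monomial with an even exponent could a priori be congruent to a nonzero combination of all-odd monomials rather than to $0$. (If you try to salvage the step with the weak form, applying $\psi$ to both sides of your congruence only returns the tautology $[F]=0$.) What the argument genuinely requires is the strong form: \emph{if $\mu(n)=k$, then every monomial of degree $n$ in which fewer than $k$ variables occur to an odd power lies in $\mathcal A^+P_k$.} This is strictly stronger than spanning, and it is the actual content of Kameko's lemma; it is essentially the statement that $uw^2$ is hit whenever $u$ is a monomial with $\deg u<\mu(\deg(uw^2))$, proved by the $\chi$-trick.

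The encouraging news is that your proposed $\omega$-vector induction, if carried out, naturally proves the strong form rather than the weak one, so the fix is to upgrade the statement, not the strategy: a reduction $z\equiv\sum(\text{monomials of smaller }\omega)$ starting from $\omega_1(z)<k$ can never raise $\omega_1$ under the left-lexicographic order, the only monomials admitting no reduction are the spikes $\prod x_i^{2^{u_i}-1}$, and the hypothesis $\mu(n)=k$ forces every spike of degree $n$ to have all $k$ exponents odd, i.e.\ $\omega_1=k$; hence the descending chain beginning at a monomial with $\omega_1<k$ must terminate in the empty sum, so that monomial is hit. You should make this termination argument explicit, and you should be aware that the existence of the reduction for an arbitrary non-spike (Singer's criterion) is the real technical core, which your proposal asserts but does not supply. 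Two smaller remarks: the $GL_k$-equivariance needs no appeal to the structural lemma at all, since $\psi$ commutes on the nose with each generator $\rho_i$ (for the transvection this is a two-line Lucas-theorem check on binomial coefficients); and your closing claim that the hypothesis ``guarantees that every spike has exactly $k$ odd exponents'' is exactly the right place where $\mu(n)=k$ enters, so it deserves a proof rather than a parenthesis.
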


\begin{defn}\label{dfn2} Let  $f, g$ be two polynomials  of the same degree in $P_k$. Then, $f \equiv g$ if and only if $f - g \in \mathcal A^+P_k$. If $f \equiv 0$, then $f$ is called hit.
\end{defn}

For $1 \leqslant i \leqslant k$, define the $\mathcal{A}$-homomorphism  $\rho_i:P_k \to P_k$, which is determined by $\rho_i(x_i) = x_{i+1}, \rho_i(x_{i+1}) = x_i$, $\rho_i(x_j) = x_j$ for $j \ne i, i+1,\ 1 \leqslant i < k$, and $\rho_k(x_1) = x_1+x_2$,  $\rho_k(x_j) = x_j$ for $j > 1$.  

It is easy to see that the general linear group $GL_k$ is generated by the matrices associated with $\rho_i,\ 1\leqslant i \leqslant k,$ and the symmetric group $\Sigma_k$ is generated by the ones associated with $\rho_i,\ 1 \leqslant i < k$. 
So, a class $[f]$ represented by a homogeneous polynomial $f \in P_k$ is an $GL_k$-invariant if and only if $\rho_i(f) \equiv f$ for $1 \leqslant i\leqslant k$.  It is an $\Sigma_k$-invariant if and only if $\rho_i(f) \equiv f$ for $1 \leqslant i < k$. 

\section{Determination of ${Tr_k}$ for $k\leqslant 3$}\label{s3}   

\subsection{Determination of ${Tr_k}$ for $k\leqslant 2$}\label{ss31}\   

\medskip
In this subsection, we present the proof of the following.
\begin{thms}[Singer \cite{si1}]\label{dls} The algebraic transfer $Tr_k$ is an isomorphism for $k \leqslant 2$.
\end{thms}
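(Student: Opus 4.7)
The plan is to handle $k=1$ and $k=2$ separately. In each case I describe $\mathbb F_2\otimes_{GL_k}P((P_k)^*)_d$ explicitly, evaluate the Ch\ohorn n--H\`a map $\phi_k$ of Theorem~\ref{md52} on a chosen set of representatives, and match the resulting cycles in $\Lambda$ against the basis of $\text{\rm Ext}^{k,k+*}_{\mathcal A}(\mathbb F_2,\mathbb F_2)$ supplied by Theorem~\ref{md51}(i).

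For $k=1$ the argument is immediate. Since $GL_1$ is trivial, it suffices to compute $P((P_1)^*)_d$. The space $(P_1)^*_d$ is one-dimensional, spanned by $a_1^{(d)}$, and dualizing the Cartan formula yields $Sq^i(a_1^{(d)})=\binom{d-i}{i}\,a_1^{(d-i)}$; by Lucas's theorem this vanishes for every $i\geqslant 1$ precisely when $d=2^m-1$. The base clause of $\phi_k$ then gives $\phi_1(a_1^{(2^m-1)})=\lambda_{2^m-1}$, so Theorem~\ref{md52} produces $Tr_1([a_1^{(2^m-1)}])=h_m$, matching the basis $\{h_m\}_{m\geqslant 0}$ of $\text{\rm Ext}^{1,*}_{\mathcal A}(\mathbb F_2,\mathbb F_2)$ from Theorem~\ref{md51}(i).

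For $k=2$ the skeleton is the same but the bookkeeping is heavier. Theorem~\ref{md51}(i) says $\text{\rm Ext}^{2,2+d}_{\mathcal A}(\mathbb F_2,\mathbb F_2)$ has basis $\{h_ih_j:0\leqslant i\leqslant j,\ j\ne i+1\}$, concentrated in the degrees $d=2^i+2^j-2$. Using the dualized Cartan formula $Sq^m(a_1^{(s)}a_2^{(t)})=\sum_{u+v=m}\binom{s-u}{u}\binom{t-v}{v}\,a_1^{(s-u)}a_2^{(t-v)}$, I would verify that each divided-power class $b_{i,j}:=a_1^{(2^i-1)}a_2^{(2^j-1)}$ is primitive, since in every cross coefficient with $u+v>0$ one of the two binomials vanishes by the Lucas argument used above. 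The recursive formula for $\phi_2$ then collapses to a single summand, giving $\phi_2(b_{i,j})=\lambda_{2^i-1}\lambda_{2^j-1}$ and hence $Tr_2([b_{i,j}])=h_ih_j$. Together with the identification $b_{i,j}\sim b_{j,i}$ in coinvariants coming from $\rho_1$, this furnishes surjectivity of $Tr_2$ onto the $\{h_ih_j\}$-basis.

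The main obstacle is the injectivity, which reduces to the dimension equality $\dim\bigl(\mathbb F_2\otimes_{GL_2}P((P_2)^*)_d\bigr)=\dim\text{\rm Ext}^{2,2+d}_{\mathcal A}(\mathbb F_2,\mathbb F_2)$ in every degree. In particular, the primitive $b_{i,i+1}$, whose image in $\text{\rm Ext}$ is zero by $h_ih_{i+1}=0$, must already vanish in the coinvariant module, and one must rule out further primitive classes in degrees $d=2^i+2^j-2$ beyond those produced by the $b_{i,j}$ with $j\ne i+1$. Both points are settled by combining the classical hit-problem description of $QP_2$ with an explicit analysis of the $GL_2$-coinvariants via the generators $\rho_1,\rho_2$ of Section~\ref{s2}, whose $\rho_2^{*}$-action on divided-power monomials is upper triangular and can be written out degree by degree. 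Naturality of the Singer transfer under Kameko's $\widetilde{Sq}^0_*$ (Theorem~\ref{dlmd2}) and Singer's $Sq^0$ on $\text{\rm Ext}$ then propagates the identification along each $Sq^0$-family $\{h_ih_j\}_i$, so that the verification collapses to a finite list of minimum-degree base cases where all primitive invariants can be enumerated by hand.
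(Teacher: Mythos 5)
Your proposal is correct and follows essentially the same route as the paper: reduce to degrees $2^{s+t}+2^s-2$ via Wood and Kameko, compute the rank-one (or zero) invariant/coinvariant spaces from Peterson's basis of $QP_2$ using the generators $\rho_i$, evaluate the Ch\ohorn n--H\`a map $\phi_k$ on the spike classes $a_1^{(2^i-1)}a_2^{(2^j-1)}$ to hit $h_ih_j$, and match against Lin's description of $\text{\rm Ext}^{\leqslant 2}_{\mathcal A}$. The only cosmetic difference is that you work dually with primitives and $GL_2$-coinvariants, whereas the paper computes $(QP_2)_n^{GL_2}$ and transports the answer through the pairing $\langle q_{2,s,t},p_{2,s,t}\rangle=1$.
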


It is well-known that
$$(QP_1)_n^{GL_1} =  (QP_1)_n = \begin{cases} \langle [x^{2^u-1}]\rangle, &\text{if } n = 2^u-1, \ u\geqslant 0,\\
0, &\text{otherwise}. \end{cases}$$

According to Theorem \ref{md51}, we have 
$$\text{Ext}_{\mathcal A}^{1, t+1}(\mathbb F_2, \mathbb F_2)  = \begin{cases} \langle h_u\rangle, &\text{if } t = 2^u-1, \ u\geqslant 0,\\
0, &\text{otherwise}. 
\end{cases}$$
Since $(P_1)^* = \Gamma(a)$  and $a^{(2^u-1)} \in P((P_1)^*)$, $\phi_1(a^{(2^u-1)}) = \lambda_{2^u-1}$ is a cycle in $\Lambda^{1,*}$. Using Theorem \ref{md52}, we get
$$Tr_1([a^{(2^u-1)}]) = [\phi_1(a^{(2^u-1)})] = [\lambda_{2^u-1}]= h_u,\ \forall u\geqslant 0.$$
So, $Tr_1$ is a isomorphism.

Now, we present the proof of this theorem for $k=2$ by computing the space  $(QP_2)^{GL_2}$. From a result of Wood \cite{wo}, we need only to compute this space in the degree $n = 2^{s+t} + 2^s - 2$ with $s, t$ non-negative integers.

First, we consider the degree $n = 2^{s+1}-2$ with $s \geqslant 0$. Since the iterated Kameko homomorphism  $(\widetilde{Sq}^0_*)^s: (QP_2)_n \to (QP_2)_0$ is a isomorphism of  $GL_2$-modules and $(QP_2)_0^{GL_2} = \langle 1\rangle$, hence $(QP_2)_n^{GL_2} = \langle [p_{2,s}]\rangle$ with $p_{2,s} := (x_1x_2)^{2^s-1}$.

Next, we compute $(QP_2)^{GL_2}_n$ with $n = 2^{s+1} + 2^s -2$,  $s\geqslant 0$. Since the iterated Kameko homomorphism 
$(\widetilde{Sq}^0_*)^s: (QP_2)_n \to (QP_2)_1$ 
is a isomorphism of  $GL_2$-modules, we need only to compute $(QP_2)_1^{GL_2}$.

According to Peterson \cite{pe}, $(QP_2)_n$ is the vector space of dimension 2 with a basis consisting of 2 classes represented by the following monomials:
$$v_{s,1} = x_1^{2^s-1}x_2^{2^{s+1}-1},\ v_{s,2} = x_1^{2^{s+1}-1}x_2^{2^{s}-1}.$$
In particular, $v_{0,1} = x_2$. $v_{0,2} = x_1$. Suppose $\theta = a_1v_1 +a_2v_2 = a_1x_2 + a_2x_1 \in (QP_2)_1^{GL_2}$ with $a_1, a_2 \in \mathbb F_2$. Then $\rho_1(\theta) = a_1v_2 + a_2v_1 \equiv \theta$.  So, we get $a_1 = a_2$. Since $\rho_2(\theta) \equiv a_1v_1+ a_2(v_1+v_2) \equiv \theta$, we obtain $a_1 = a_2 =0$. Hence,  $(QP_2)_1^{GL_2} = 0$ and $(QP_2)_n^{GL_2} = 0$.

Now, we consider the degree $n = 2^{s+t} + 2^s -2$ with $s, t$ non-negative integers, $t\geqslant 2$. Since  $(\widetilde{Sq}^0_*)^s: (QP_2)_n \to (QP_2)_{2^t-1}$ is a isomorphism of  $GL_2$-modules, we need only to compute $(QP_2)_{2^t-1}^{GL_2}$. 
According to Peterson \cite{pe}, $(QP_2)_{2^t-1}$ is the vector space of dimension 3 with a basis consisting of 3 classes represented by the following monomials:
$$u_{t,1} = x_1^{2^t-1},\ u_{t,2} = x_2^{2^{t}-1},\ u_{t,3} = x_1x_2^{2^{t}-2}.$$
Suppose $\theta_t = a_1u_{t,1} +a_2u_{t,2} +a_3u_{t,3}$ with $a_1, a_2, a_3 \in \mathbb F_2$ and $[\theta_t] \in (QP_2)_{2^t-1}^{GL_2}$. By a simple computation, we have
  $\rho_1(\theta_t) = a_1u_{t,2} + a_2u_{t,1} +a_3u_{t,3} \equiv\theta_t$, hence $a_1 = a_2 = a$. Then, $\rho_2(\theta_t) \equiv a(u_{t,1}+u_{t,2}) + au_{t,2}+ a_3(u_{t,2} + u_{t,3}) \equiv \theta_t$. So, we get $a_3 =a$. Hence, $\theta_t= ap_{2,0,t}$ with $p_{2,0,t} = u_{t,1} + u_{t,2} + u_{t,3}$ and
$$(QP_2)_{n}^{GL_2} = \langle [\psi^s(p_{2,0,t})]\rangle.$$

Combining the above results, we obtain

\begin{props}\label{md421} Let $n$ be a non-negative integer. We have
$$(QP_2)_{n}^{GL_2} = \begin{cases}\langle [p_{2,s}] \rangle, & \text{if }\ n = 2^{s+1}-2,\ s \geqslant 0\\
\langle [p_{2,s,t}]\rangle, & \text{if }\ n = 2^{s+t}+2^s-2,\ s\geqslant 0, \ t\geqslant 2,\\
0, & \text{otherwise},\\
  \end{cases} $$
where $p_{2,s,t} = \psi^s(p_{2,0,t})$.
\end{props}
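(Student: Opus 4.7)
The plan is to reduce the computation of $(QP_2)_n^{GL_2}$ to three low-degree base cases via Wood's vanishing theorem and Kameko's isomorphism, and then to solve a small invariant-theoretic problem in each. First, by Wood's theorem, $(QP_k)_n = 0$ unless $\mu(n+k) \leqslant k$. For $k = 2$ this forces $n = 2^{s+t} + 2^s - 2$ for some non-negative integers $s, t$; all other degrees automatically contribute $0$, accounting for the ``otherwise'' case of the proposition.

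Next, I would apply Theorem \ref{dlmd2} iteratively. In each admissible degree the iterated Kameko map $(\widetilde{Sq}^0_*)^s : (QP_2)_n \to (QP_2)_{n'}$ is a $GL_2$-isomorphism, where the reduced degree is $n' = 0$ when $t = 0$, $n' = 1$ when $t = 1$, and $n' = 2^t - 1$ when $t \geqslant 2$. The problem therefore reduces to computing $(QP_2)_{n'}^{GL_2}$ in these three base cases; pulling back through $\psi^s$ then recovers the stated generators $p_{2,s} = (x_1x_2)^{2^s-1}$ and $p_{2,s,t} = \psi^s(p_{2,0,t})$.

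In each base case I would use Peterson's explicit basis for $(QP_2)_{n'}$, which has dimensions $1$, $2$, and $3$ respectively, together with the characterization that $[f] \in (QP_2)^{GL_2}$ if and only if $\rho_1(f) \equiv f$ and $\rho_2(f) \equiv f$. Writing a general class as an $\mathbb{F}_2$-linear combination of basis elements and imposing these two relations yields a small linear system over $\mathbb{F}_2$, whose solutions are $\langle [1]\rangle$ in degree $0$, zero in degree $1$, and $\langle [x_1^{2^t-1} + x_2^{2^t-1} + x_1x_2^{2^t-2}]\rangle$ in degree $2^t-1$ for $t \geqslant 2$.

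The main, though modest, obstacle lies in the last case: because $\rho_2$ sends $x_1$ to $x_1 + x_2$, it mixes Peterson's basis monomials $u_{t,1}, u_{t,2}, u_{t,3}$, so one must expand $\rho_2(u_{t,i})$ and reduce modulo $\mathcal{A}^+ P_2$ to verify that invariance forces the three coefficients to coincide. Once this identification is checked, the linear algebra is trivial and the proposition follows.
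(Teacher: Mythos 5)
Your proposal matches the paper's proof essentially step for step: Wood's theorem to restrict to degrees $2^{s+t}+2^s-2$, iterated Kameko isomorphisms to reduce to the base degrees $0$, $1$, and $2^t-1$, and then a direct invariance computation with $\rho_1,\rho_2$ on Peterson's bases of dimensions $1$, $2$, $3$, including the key point that $\rho_2$ mixes the basis monomials and must be reduced modulo $\mathcal A^+P_2$. The resulting generators $\langle[1]\rangle$, $0$, and $\langle[u_{t,1}+u_{t,2}+u_{t,3}]\rangle$ agree with the paper's $p_{2,0,t}$, so the argument is correct and not materially different.
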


Recall that $(P_2)^* = \Gamma(a_1,a_2)$. For any $s, t \geqslant 0$,  we set
 $$q_{2,s,t} := a_1^{(2^s-1)}a_2^{(2^{s+t}-1)}\in P((P_2)^*_{2^{s+t} + 2^s-2}).$$ 
Since $\langle q_{2,s,0}, p_{2,s}\rangle = 1$ and $\langle q_{2,s,t}, p_{2,s,t}\rangle = 1$ for every $s \geqslant 0, t\geqslant 2$, from Proposition \ref{md421}, we get the following.
\begin{props} For $n$ a non-negative integer, we obtain
$$\mathbb F_2{\otimes}_{GL_2} P((P_2)^*_n)  = \begin{cases}\langle [q_{2,s,0}] \rangle, & \text{if }\ n = 2^{s+1}-2,\ s \geqslant 0\\
\langle [q_{2,s,t}]\rangle, & \text{if }\ n = 2^{s+t}+2^s-2, s\geqslant 0, \ t\geqslant 2,\\
0, & \text{otherwise}.\\
  \end{cases} $$
\end{props}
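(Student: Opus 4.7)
The plan is to derive this from Proposition~\ref{md421} by a duality argument. The standard pairing $\langle-,-\rangle\colon (P_k)^*\times P_k\to\mathbb F_2$ between the divided-power basis and the monomial basis is $\mathcal A$-compatible since the $\mathcal A$-action on $(P_k)^*$ is defined contragrediently. Hence primitives annihilate hit elements, and the pairing descends to a perfect pairing $P((P_k)^*)_n\times (QP_k)_n\to\mathbb F_2$ in each degree. The $GL_k$-action on $(P_k)^*$ is likewise contragredient, so taking $GL_k$-coinvariants on the left and $GL_k$-invariants on the right yields a perfect pairing
\[
\mathbb F_2\otimes_{GL_k} P((P_k)^*)_n\ \times\ (QP_k)_n^{GL_k}\ \longrightarrow\ \mathbb F_2.
\]
With $k=2$, Proposition~\ref{md421} thus determines the dimension of $\mathbb F_2\otimes_{GL_2}P((P_2)^*)_n$ in every degree, establishing vanishing in the ``otherwise'' case and reducing each nonzero case to exhibiting a single nonzero class.

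Next, I would verify that $q_{2,s,t}$ is indeed primitive. Because the Cartan formula exhibits $(P_2)^* = \Gamma(a_1)\otimes\Gamma(a_2)$ as a tensor product of $\mathcal A$-modules, the question reduces to the single-variable case: does $Sq^i$ annihilate $a^{(2^r-1)}$ for all $i>0$? Dualizing $Sq^i(x^m)=\binom{m}{i}x^{m+i}$ gives $Sq^i a^{(m)}=\binom{m-i}{i}a^{(m-i)}$, and for $m=2^r-1$ and $0<i\leqslant m$ a short application of Lucas's theorem makes $\binom{2^r-1-i}{i}$ vanish mod $2$. This yields primitivity of each tensor factor and therefore of $q_{2,s,t}$.

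Finally, to confirm that $[q_{2,s,t}]$ generates the target in the nonzero degrees, I would compute its pairing with the $GL_2$-invariant class supplied by Proposition~\ref{md421}. For the degree $2^{s+1}-2$ case, $\langle q_{2,s,0},p_{2,s}\rangle = \langle a_1^{(2^s-1)}a_2^{(2^s-1)},(x_1x_2)^{2^s-1}\rangle = 1$ by direct evaluation on divided-power monomials. For the degree $2^{s+t}+2^s-2$ case with $t\geqslant 2$, I would expand $p_{2,s,t}=\psi^s(p_{2,0,t})$ via the iterated identity $\psi^{-s}(f)=(x_1x_2)^{2^s-1}f^{2^s}$ into three explicit monomials, of which only $x_1^{2^s-1}x_2^{2^{s+t}-1}$ pairs nontrivially with $q_{2,s,t}$, giving again $1$. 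The main obstacle is the duality setup, specifically the careful verification that $GL_k$-coinvariants on the divided-power side pair perfectly with $GL_k$-invariants on the polynomial side; once this is in place the remaining primitivity check and pairing evaluations are routine.
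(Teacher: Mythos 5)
Your proposal is correct and follows essentially the same route as the paper: the paper likewise deduces the statement from Proposition~\ref{md421} via the duality between $GL_2$-coinvariants of $P((P_2)^*_n)$ and $GL_2$-invariants of $(QP_2)_n$, simply asserting that $q_{2,s,t}$ is primitive and that $\langle q_{2,s,0},p_{2,s}\rangle=\langle q_{2,s,t},p_{2,s,t}\rangle=1$. Your write-up merely supplies the details the paper leaves implicit --- the perfect-pairing setup, the Lucas-theorem primitivity check, and the expansion $p_{2,s,t}=(x_1x_2)^{2^s-1}p_{2,0,t}^{2^s}$ (where, as you correctly note, the paper's $\psi^s$ in this formula denotes the doubling map inverse to Kameko's homomorphism).
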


It is easy to see that
$\phi_2(q_{2,s,t}) = \lambda_{2^s-1}\lambda_{2^{s+t}-1}$
is a cycle in $\Lambda^{2,*}$. Applying Theorem \ref{md52}, we get
$$Tr_2([q_{2,s,t}]) = [\phi_2(q_{2,s,t})] = [\lambda_{2^s-1}\lambda_{s+t}]= h_sh_{s+t}.$$

Since $h_sh_{s+1} =0$, applying Theorem \ref{md51}, we have 
$$\text{Ext}_{\mathcal A}^{2, m}(\mathbb F_2, \mathbb F_2)  = \begin{cases} 
\langle h_s^2\rangle, &\text{if } m = 2^{s+1}, \text{ with } s\geqslant 0,\\
\langle h_sh_{s+t}\rangle, &\text{if } m = 2^{s+t}+2^s, \text{ with } s\geqslant 0, t \geqslant 2,\\
0, &\text{otherwise}. 
\end{cases}$$
Theorem \ref{dls} is completely proved.

\subsection{Determination of $Tr_3$}\

\medskip
In this subsection, we present the proof of the following.
\begin{thms}[Boardman \cite{bo}]\label{dlb} The third Singer algebraic transfer 
$$Tr_3 :\mathbb F_2{\otimes}_{GL_3} P((P_3)^*) \longrightarrow \text{\rm Ext}_{\mathcal A}^{3,*+3}(\mathbb F_2,\mathbb F_2)$$ 
is an isomorphism.
\end{thms}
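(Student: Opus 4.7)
The plan is to follow exactly the same template the paper has just used for $Tr_2$: bound the degrees via Wood's vanishing theorem, collapse them via Kameko's isomorphism to a list of ``generic'' degrees, compute $(QP_3)_n^{GL_3}$ by direct invariance calculations on the known monomial basis of $(QP_3)_n$, and then match against Lin's description of $\text{Ext}_{\mathcal A}^{3,*+3}$ (Theorem \ref{md51}(i)) by applying the Ch\ohorn n--H\`a map $\phi_3$ to explicit divided-power representatives.

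First, I would recall Wood's theorem, which forces $(QP_3)_n=0$ unless $\mu(n+3)\leqslant 3$, so the only degrees to examine are $n=2^{s+t+u}+2^{s+t}+2^s-3$ with $s\geqslant 0$ and $t,u\geqslant 0$ (degenerate cases included). By Theorem \ref{dlmd2}, when $\mu(2m+3)=3$ the Kameko homomorphism is a $GL_3$-isomorphism, so by iterating $(\widetilde{Sq}^0_*)^s$ one reduces to the generic cases where $s=0$, namely $n=2^{t+u}+2^t-2$ (and the subcases $t=0$, $u=0$). In each such reduced degree I would invoke the solution of the Peterson hit problem for $P_3$ (Kameko's thesis, reproduced in Sum \cite{su2}), which gives an explicit monomial basis of $(QP_3)_n$ of small cardinality.

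Next, for each generic $n$, I would write a general invariant $\theta=\sum a_i m_i$ and impose the three conditions $\rho_i(\theta)\equiv\theta$ for $i=1,2,3$; since $\Sigma_3$ is generated by $\rho_1,\rho_2$ and $GL_3$ is generated by $\Sigma_3$ together with $\rho_3$, this pins down $(QP_3)_n^{GL_3}$ exactly as in the $k=2$ proof of Theorem \ref{dls}. The expected outcome, matching Lin, is a one-dimensional invariant space in each of the families corresponding to the nonzero products $h_ah_bh_c$ (subject to $h_ih_{i+1}=0$ and $h_ih_{i+2}^2=0$ and $h_i^3=h_{i-1}^2h_{i+1}$), together with one extra one-dimensional invariant in the Kameko-shift family of degrees $n=2^{s+3}+2^{s+1}+2^s-3$ corresponding to the $Sq^0$-family $\{c_s\}$, and $0$ in all other degrees.

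Finally, I would exhibit explicit dual representatives: the monomials
\[
q_{3,s,t,u}:=a_1^{(2^s-1)}a_2^{(2^{s+t}-1)}a_3^{(2^{s+t+u}-1)}\in P((P_3)^*)
\]
for the $h_sh_{s+t}h_{s+t+u}$ classes, and an analogous explicit divided-power element $\bar q_{3,s}$ (obtained from the one for $c_0$ by applying $(\widetilde{Sq}{}^0)^s$ at the level of $(P_3)^*$) for the $c_s$ classes. By the inductive formula for $\phi_3$ and a direct lambda-algebra computation using relations \eqref{cth1}--\eqref{cth2}, $\phi_3(q_{3,s,t,u})=\lambda_{2^s-1}\lambda_{2^{s+t}-1}\lambda_{2^{s+t+u}-1}$ is a cycle representing $h_sh_{s+t}h_{s+t+u}$, and $\phi_3(\bar q_{3,s})$ is a cycle representing $c_s$ (the latter being the Lin/Wang cycle in $\Lambda^{3,*}$). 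Combined with Theorem \ref{md52}, this proves $Tr_3$ hits every basis element of $\text{Ext}_{\mathcal A}^{3,*+3}(\mathbb F_2,\mathbb F_2)$ given by Theorem \ref{md51}(i), and the dimension count from the previous paragraph shows $Tr_3$ is an isomorphism.

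The main obstacle will be the $c_s$-family: unlike the product families, finding the correct divided-power representative $\bar q_{3,s}$ (and checking that $\phi_3$ sends it to a cycle homologous to $\widetilde{Sq}{}^0{}^s(\bar c_0)$ in $\Lambda$) requires a genuine lambda-algebra identification rather than a purely formal calculation, and the invariance verification in the corresponding degree involves the largest monomial basis among all the generic cases. Once these two points are settled, the remaining degrees only require bookkeeping of the same flavor as the $Tr_2$ proof.
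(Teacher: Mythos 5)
Your proposal follows essentially the same route as the paper's proof: Wood's theorem plus iterated Kameko isomorphisms to reduce to a short list of degrees, direct $\rho_i$-invariance computations on Kameko's monomial bases of $(QP_3)_n$, explicit divided-power representatives $q_{3,s,t,u}$ mapped by $\phi_3$ to $\lambda$-monomial cycles for the $h_ah_bh_c$ classes, and a separate treatment of the $c_u$ family (which the paper handles by computing $\phi_3(\bar c_0)=\lambda_2\lambda_3^2$ once and then using the compatibility of $Tr_3$ with $Sq^0$, exactly the shortcut you anticipate). The only cosmetic difference is your relabeling of the degree parameters; the substance, including the two-dimensional invariant spaces in the degrees carrying both $h_{u-1}^2h_{t+u}$ and $h_uh_{t+u-1}^2$, matches the paper.
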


To prove this theorem, Boardman \cite{bo} computed the space $QP_3^{GL_3}$ by using a basis consisting of the all the classes represented by certain polynomials in $P_3$. It is difficult to use his method for $k = 4$, where there are 315 polynomials instead of 21. We also compute this space, however we use the admissible monomial basis for $QP_3$ that is different from the one of Boardman in \cite{bo}. Our approach can be apply for $k = 4$ by using the admissible monomial basis for $QP_4$ which is given in \cite{su2,su5}. 

\medskip
From a result of Wood \cite{wo}, we need only to compute $QP_3^{GL_3}$ in the degree $n$ with $\mu(n) \leqslant 3$. 

\subsubsection {The case $n = 2^{t+1}-2$}\label{bab1}\

\medskip
According to Kameko \cite{ka}, $(QP_3)_n$ is a vector space with a basis consisting of all the classes represented by the following monomials:
\begin{align*}&v_{t,1} = x_2^{2^t-1}x_3^{2^t-1}, \ v_{t,2} = x_1^{2^t-1}x_3^{2^t-1},\  v_{t,3} = x_1^{2^t-1}x_2^{2^t-1},\ \text{ for $t\geqslant 1$},\\
&v_{t,4} = x_1x_2^{2^t-2}x_3^{2^t-1},\  v_{t,5} = x_1x_2^{2^t-1}x_3^{2^t-2},\ v_{t,6} = x_1^{2^t-1}x_2x_3^{2^t-2}, \ \text{ for $t\geqslant 2$},\\
&v_{t,7} = x_1^3x_2^{2^t-3}x_3^{2^t-2}, \ \text{ for $t\geqslant 3$}
\end{align*}

Set $p_{3,t} = \sum_{i=1}^7v_{t,i}$, with  $t \geqslant 3$. By a direct computation, we have
\begin{props}\label{dl312} For any non-negative integer $t$, we have
$$ (QP_3)_{2^{t+1}-2}^{GL_3} = \begin{cases} \langle 1 \rangle, &\text{if }\ t=0,\\
0, &\text{if }\ t=1,2,\\
\langle [p_{3,t} ] \rangle, &\text{if }\ t\geqslant 3.\end{cases}
$$
\end{props}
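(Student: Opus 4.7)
The plan is to compute $(QP_3)^{GL_3}_{2^{t+1}-2}$ directly, using the Kameko basis listed above and the fact from Section \ref{s2} that a class $[f]$ is $GL_3$-invariant iff $\rho_j(f) \equiv f$ for $j=1,2,3$. Since $\rho_1, \rho_2$ already generate $\Sigma_3$, I will proceed in two stages: first determine the $\Sigma_3$-invariant subspace of $(QP_3)_{2^{t+1}-2}$ by elementary linear algebra, then cut it down by imposing the extra constraint $\rho_3(\theta) \equiv \theta$. For each basis monomial $v_{t,i}$, the polynomial $\rho_j(v_{t,i})$ is computed directly and then rewritten modulo $\mathcal{A}^+ P_3$ as a linear combination of basis classes, using the Cartan formula and the admissible monomial results in Kameko~\cite{ka} that pin down which monomials of degree $2^{t+1}-2$ are hit.

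The case $t = 0$ is trivial since $(QP_3)_0 = \langle [1] \rangle$. For $t = 1$, the space $(QP_3)_2$ has Kameko basis $\{[v_{1,1}], [v_{1,2}], [v_{1,3}]\}$ permuted faithfully by $\Sigma_3$, so the only $\Sigma_3$-invariant has the form $a[v_{1,1}+v_{1,2}+v_{1,3}]$; expanding $\rho_3(x_1) = x_1+x_2$ and discarding the hit term $x_2^2 = Sq^1 x_2$ yields $\rho_3(\theta) \equiv a[x_1 x_3 + x_1 x_2]$, forcing $a=0$. The case $t=2$ is handled by the same scheme: one first enumerates the $\Sigma_3$-invariants inside the 6-dimensional space $(QP_3)_6$ (after reducing each $\rho_1, \rho_2$ image back to the Kameko basis), and then one checks that the $\rho_3$-action has no nonzero fixed vector on this small invariant subspace.

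The essential case is $t \geqslant 3$. Here a direct computation of the $\Sigma_3$-action on the seven basis classes shows that the $\Sigma_3$-invariant subspace is spanned by the three ``symmetrized'' classes
$\sigma_1 = v_{t,1}+v_{t,2}+v_{t,3}$, $\sigma_2 = v_{t,4}+v_{t,5}+v_{t,6}$, and $\sigma_3 = v_{t,7}$, the last being $\Sigma_3$-fixed modulo hits once the images $\rho_i(v_{t,7})$ are reduced to Kameko's basis. Imposing $\rho_3$-invariance on an arbitrary linear combination $a_1 \sigma_1 + a_2 \sigma_2 + a_3 \sigma_3$ produces a linear system in $(a_1, a_2, a_3)$: expanding $(x_1+x_2)^{2^t-1}$, $(x_1+x_2)^{2^t-2}$, and $(x_1+x_2)^3$ via the binomial theorem over $\mathbb{F}_2$ and reducing the resulting monomials modulo hits, one finds the unique nonzero solution $a_1 = a_2 = a_3$, so $(QP_3)^{GL_3}_{2^{t+1}-2} = \langle [\sigma_1 + \sigma_2 + \sigma_3] \rangle = \langle [p_{3,t}] \rangle$.

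The main technical obstacle is the basis-reduction step. Applying $\rho_3$ to a monomial such as $x_1^{2^t-1} x_3^{2^t-1}$ yields $(x_1+x_2)^{2^t-1} x_3^{2^t-1} = \sum_{j=0}^{2^t-1} x_1^j x_2^{2^t-1-j} x_3^{2^t-1}$ (since all $\binom{2^t-1}{j}$ are odd), and each resulting non-Kameko monomial must be rewritten as a combination of the $v_{t,i}$ using explicit hit relations, which grows in bookkeeping with $t$. Once these reductions are tabulated and shown to stabilize for $t \geqslant 3$, the final $\rho_3$-invariance condition becomes an elementary linear computation and the proposition follows.
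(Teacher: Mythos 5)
Your proposal follows the same route as the paper: the paper lists Kameko's monomial basis for $(QP_3)_{2^{t+1}-2}$ and then asserts the result ``by a direct computation'', which throughout this paper means exactly the two-stage procedure you describe --- first solving $\rho_1(f)\equiv f$, $\rho_2(f)\equiv f$ to get the $\Sigma_3$-invariants, then imposing $\rho_3(f)\equiv f$ (compare the explicit $k=2$ computation in Subsection \ref{ss31} and Lemmas \ref{bd2m3}, \ref{bd2m2} for $k=4$). Your outline is correct, and the only material left implicit --- on both sides --- is the tabulation of hit relations needed to rewrite each $\rho_j(v_{t,i})$ in the basis, which you rightly flag as the technical core.
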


Recall that $(P_3)^* = \Gamma(a_1,a_2, a_3)$. We set 
$$q_{3,t} = a_1^{(0)}a_2^{(2^t-1)}a_3^{(2^{t}-1)}\in P((P_3)^*_{2^{t+1}-2}).$$
Since $\langle p_{3,t},q_{3,t}\rangle =1$, we get 
$$\mathbb F_2{\otimes}_{GL_3} P((P_3)^*_{2^{t+1} - 2})  = \begin{cases}\langle [1] \rangle, & \text{if }\ t=0\\ 0, & \text{if }\ t=1,2,\\
\langle [q_{3,t}]\rangle, & \text{if }\  t\geqslant 3.\\
\end{cases} $$

It is easy to see that $\phi_3(q_{3,t}) = \lambda_0\lambda_{2^t-1}^2$ 
is a cycle in $\Lambda^{3,*}$. By Theorem \ref{md52}, we have
$$Tr_3([q_{3,t}]) = [\phi_3(q_{3,t})] = [\lambda_{0}\lambda_{2^t-1}^2]= h_0h_{t}^2.$$

According to Theorem \ref{md51}, we have 
$$\text{Ext}_{\mathcal A}^{3, 2^{t+1}+1}(\mathbb F_2, \mathbb F_2)  =  \langle h_0h_{t}^2\rangle.$$
Since $h_0h_{1} =0$ and $h_0h_2^2 =0$, from the above equalities we see that Theorem \ref{dlb} is true in this case.

\subsubsection {The case $n = 2^{t+u}+ 2^u - 3$}\label{bab2}\
 
\medskip
If $u>1$ then $\mu(n) =3$, hence the iterated Kameko homomorphism 
$$(\widetilde{Sq}^0_*)^{u-1} :(QP_3)_{2^{t+u}+ 2^u - 3} \to (QP_3)_{2^{t+1}-1}$$ 
is also an isomorphism $GL_3$-modules. Hence, we need only to compute $(QP_3)_{2^{t+1}-1}^{GL_3}$. According to Kameko \cite{ka}, $(QP_3)_{n}$ is a vector space with a basis consisting of all the classes represented by the following monomials:
\begin{align*}&u_{t,1} = x_3^{2^{t+1} - 1}, \ u_{t,2} = x_2^{2^{t+1} - 1},\  u_{t,3} = x_1^{2^{t+1} - 1},\ \text{ for } \ t\geqslant 0,\\
&u_{t,4} = x_2x_3^{2^{t+1} - 2},\  u_{t,5} = x_1x_3^{2^{t+1} - 2},\  u_{t,6} = x_1x_2^{2^{t+1} - 2},\ \text{ for } \ t\geqslant 1,\\
&u_{1,7} = x_1x_2x_3, \text{ for } \ t = 1,\\
&u_{t,7} = x_1x_2^2x_3^{2^{t+1}-4}, \ u_{t,8} = x_1x_2^{2^t-1}x_3^{2^t-1},\\ 
&u_{t,9} = x_1^{2^t-1}x_2x_3^{2^t-1},\ u_{t,10} = x_1^{2^t-1}x_2^{2^t-1} x_3,\ \text{ for } \ t\geqslant 2,\\
&u_{t,11} = x_1^3x_2^{2^t-3}x_3^{2^t-1},\  u_{t,12} = x_1^3x_2^{2^t-1}x_3^{2^t-3}, \  u_{t,13} = x_1^{2^t-1}x_2^3x_3^{2^t-3},\ \text{ for } \ t\geqslant 3,\\
&u_{t,14} =  x_1^7x_2^{2^t-5}x_3^{2^t-3},\ \text{ for } \ t\geqslant 4.
\end{align*}

Set $p_{3,t,1} =  \sum_{i=1}^7u_{t,i}$ for $t \geqslant 1$ and $\bar p_{3,t,1} = \sum_{j=7}^{14}u_{t,j}$ for $t \geqslant 4$. By a direct computation we have
\begin{props}\label{mdp42} For any integers $t\geqslant 0,\ u>0$, we have
$$ (QP_3)_{2^{t+u}+2^u-3}^{GL_3} = \begin{cases} 
0, &\text{if }\ t=0,\\
\langle [p_{3,t,u}] \rangle, &\text{if }\ 1 \leqslant t \leqslant 3,\\
\langle [p_{3,t,u}], [\bar p_{3,t,u}] \rangle, &\text{if }\ t\geqslant 4,\end{cases}
$$
where $p_{3,t,u} = \psi^{u-1}(p_{3,t,1})$,  $\bar p_{3,t,u} = \psi^{u-1}(\bar p_{3,t,1})$.
\end{props}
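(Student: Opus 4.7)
The plan is to reduce to the base case $u=1$ via Kameko's homomorphism, and then carry out a direct linear-algebra computation on the explicit monomial basis of $(QP_3)_{2^{t+1}-1}$. First I would verify that for $u\geqslant 2$ one has $\mu(2^{t+u}+2^u-3)=3$, by exhibiting the decomposition
\[
2^{t+u}+2^u-3 \;=\; (2^{t+u}-1)+(2^{u-1}-1)+(2^{u-1}-1),
\]
and by a straightforward binary-digit count on $n+1$ and $n+2$ showing that no decomposition into one or two terms of the form $2^a-1$ exists when $u\geqslant 2$. Theorem \ref{dlmd2} then yields an iterated Kameko isomorphism of $GL_3$-modules
\[
(\widetilde{Sq}^0_*)^{u-1}:(QP_3)_{2^{t+u}+2^u-3}\longrightarrow (QP_3)_{2^{t+1}-1},
\]
which, by the very definition of $\psi$, sends $p_{3,t,u}\mapsto p_{3,t,1}$ and $\bar p_{3,t,u}\mapsto \bar p_{3,t,1}$. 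So it suffices to compute $(QP_3)_{2^{t+1}-1}^{GL_3}$ for each $t\geqslant 0$.

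For the base case I would use the explicit Kameko basis $\{u_{t,i}\}$ recorded above. The lower bound on the invariant subspace is obtained by checking that $p_{3,t,1}$ (and, when $t\geqslant 4$, also $\bar p_{3,t,1}$) are genuinely $GL_3$-invariant: one applies $\rho_1,\rho_2,\rho_3$ to each summand, expands $\rho_3(x_1^a\cdots)=(x_1+x_2)^a\cdots$ via the binomial formula, and rewrites every non-admissible monomial modulo $\mathcal A^+P_3$ using Cartan-formula identities such as $Sq^1(x_ix_j)=x_i^2x_j+x_ix_j^2$ and their higher-degree analogues. For the matching upper bound I would write a general class $\theta_t=\sum_i a_i u_{t,i}$, impose $\rho_1(\theta_t)\equiv\theta_t$ and $\rho_2(\theta_t)\equiv\theta_t$ to descend to the $\Sigma_3$-invariant subspace, and then impose $\rho_3(\theta_t)\equiv\theta_t$ to produce a linear system in the $a_i$ whose solution space matches the claimed dimension. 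The case $t=0$ is immediate: $(QP_3)_1$ is spanned by $x_1,x_2,x_3$, its only $\Sigma_3$-invariant is $x_1+x_2+x_3$, and $\rho_3(x_1+x_2+x_3)=x_1+x_3$ is manifestly not equivalent to it, so $(QP_3)_1^{GL_3}=0$.

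The main obstacle is the $\rho_3$-calculation: expanding $(x_1+x_2)^a$ in each $\rho_3(u_{t,i})$ produces many non-admissible monomials, and rewriting each back to the Kameko basis modulo $\mathcal A^+P_3$ requires the full hit-problem data for $P_3$ in degree $2^{t+1}-1$. The case $t\geqslant 4$ is the heaviest, because the extra basis element $u_{t,14}=x_1^7x_2^{2^t-5}x_3^{2^t-3}$ couples nontrivially with the other $u_{t,i}$ under $\rho_3$, and one must verify simultaneously that $p_{3,t,1}$ and $\bar p_{3,t,1}$ are linearly independent in $(QP_3)_{2^{t+1}-1}$ and that the two-dimensional span they generate is not enlarged by any further invariant. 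Once this bookkeeping is organized, the pattern stabilizes for $t\geqslant 4$ and all remaining checks are finite for each fixed $t$.
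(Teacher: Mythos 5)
Your proposal is correct and follows essentially the same route as the paper: reduce to the degree $2^{t+1}-1$ via the iterated Kameko isomorphism (justified by $\mu(2^{t+u}+2^u-3)=3$ for $u\geqslant 2$), then determine the invariant subspace by imposing $\rho_1,\rho_2,\rho_3$ on a general class written in Kameko's monomial basis $\{u_{t,i}\}$. The paper likewise leaves that final step as a ``direct computation,'' so your plan matches it in both structure and level of detail.
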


We set
$$q_{3,t,u} = a_1^{(2^{u-1}-1)}a_2^{(2^{u-1}-1)}a_3^{(2^{t+u}-1)},\quad \bar q_{3,t,u} = a_1^{(2^u-1)}a_2^{(2^{t+u-1}-1)}a_3^{(2^{t+u-1}-1)}. $$
It is easy to see that  $q_{3,t,u},\ \bar q_{3,t,u}  \in  P((P_3)^*_{2^{t+1}-2})$ and 
\begin{align*}&\langle p_{3,t,u},q_{3,t,u}\rangle =1, \ \langle p_{3,t,u},\bar q_{3,t,u}\rangle =0, \\
&\langle \bar p_{3,t,u},q_{3,t,u}\rangle =0, \ \langle \bar p_{3,t,u},\bar q_{3,t,u}\rangle =1.
\end{align*}
So, we get 
$$\mathbb F_2{\otimes}_{GL_3} P((P_3)^*_{2^{t+u} + 2^u - 3})  = \begin{cases}
0,& \text{if }\ t=0,\\
\langle [q_{3,t,u}] \rangle, & \text{if }\ 1 \leqslant t \leqslant 3,\\
\langle [q_{3,t,u}] ,[\bar q_{3,t,u}] \rangle, & \text{if }\  t\geqslant 4.
\end{cases} $$

By applying Theorem \ref{md52}, we have  
\begin{align*}\phi_3(q_{3,t,u}) &= \lambda_{2^{u-1}-1}^2\lambda_{2^{t+u}-1},\\
\phi_3(\bar q_{3,t,u}) &= \lambda_{2^u-1}\lambda_{2^{t+u-1}-1}^2
\end{align*}
are the cycles in $\Lambda^{3,*}$. So, we obtain
\begin{align*}Tr_3([q_{3,t,u}]) &= [\phi_3(q_{3,t,u})] = [\lambda_{2^{u-1}-1}^2\lambda_{2^{t+u}-1}]= h_{u-1}^2h_{t+u},\\ Tr_3([\bar q_{3,t,u}]) &= [\phi_3(\bar q_{3,t,u})] = [\lambda_{2^{u}-1}^2\lambda_{2^{t+u-1}-1}^2]= h_{u}h_{t+u-1}^2.
\end{align*}

According to Theorem \ref{md51}, we have 
$$\text{Ext}_{\mathcal A}^{3, 2^{t+u}+2^u}(\mathbb F_2, \mathbb F_2)  =  \langle h_uh_{t+u-1}^2, h_{u-1}^2h_{t+u} \rangle.$$
If $t=0$ then  $h_uh_{u-1}^2 = h_u^2h_{u-1} =0$. If $t = 1$ then $h_uh_{t+u-1}^2 = h_u^3 = h_{u-1}^2h_{u+1} = h_{u-1}^2h_{t+u}$. If $t=2$ then $h_uh_{t+u-1}^2 = h_uh_{u+1}^2 =0$. If $t=3$ then $h_uh_{t+u-1}^2 = h_uh_{u+2}^2 =0$. Hence, from the above equalities we can easily see that Theorem \ref{dlb} is true in this case.

\subsubsection {The case $n = 2^{s+u+1}+ 2^{u+1} +2^u - 3$}\label{bab3}\

\medskip
If $u>0$ then $\mu(n) =3$, hence the iterated Kameko homomorphism 
$$(\widetilde{Sq}^0_*)^{u} :(QP_3)_{2^{s+u}+ 2^u - 3} \to (QP_3)_{2^{s+1}}$$ 
is also an isomorphism of $GL_3$-modules. Hence, we need only to compute $(QP_3)_{2^{s+1}}^{GL_3}$. 

According to Kameko \cite{ka}, $(QP_3)_{2^{s + 1}}$ is a vector space with a basis consisting of all the classes represented by the following monomials:
\begin{align*}
&v_{s,1} = x_2x_3^{2^{s + 1}-1},\ v_{s,2} = x_2^{2^{s + 1}-1}x_3,\ v_{s,3} = x_1x_3^{2^{s + 1}-1},\\ &v_{s,4} = x_1x_2^{2^{s + 1}-1},\ v_{s,5} = x_1^{2^{s + 1}-1}x_3,\ v_{s,6} = x_1^{2^{s + 1}-1}x_2, \ \text{for }\ s \geqslant 1,\\
&v_{1,7} = x_1x_2x_3^2, \ v_{1,8} = x_1x_2^2x_3,\  \text{for }\ s=1,\\
&v_{s,7} = x_2^3x_3^{2^{s + 1}-3},\ v_{s,8} = x_1^3x_3^{2^{s + 1}-3},\ v_{s,9} = x_1^3x_2^{2^{s + 1}-3}, \\ &v_{s,10} = x_1x_2x_2^{2^{s + 1}-2},\ v_{s,11} = x_1x_2^{2^{s + 1}-2}x_3,\ v_{s,12} =x_1x_2^2x_3^{2^{s + 1}-3}, \\ &v_{s,13} =x_1x_2^3x_3^{2^{s + 1}-4},\ v_{s,14} =x_1^3x_2x_3^{2^{s + 1}-4}\ \text{ for }\ s \geqslant 2\\
&v_{15} = x_1^3x_2^4x_3, \  \text{for }\ s = 2.
\end{align*}

Set $\bar p_0 =  v_{2,10}+v_{2,11}+v_{2,14}+v_{2,15}$. By a direct computation, we have
\begin{props}\label{mdp43} For any integers $s> 0, u \geqslant 0$ and $ n= 2^{s+u+1} + 2^{u+1} + 2^u -3$, we have
$$ (QP_3)_{n}^{GL_3} = 
\begin{cases}\langle [\psi^u(\bar p_0)]\rangle, &\text{if } s=2,\\ 0, &\text{if } s \ne 2.\end{cases}$$
\end{props}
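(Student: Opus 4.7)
The plan is to reduce the general case to a single base degree using Kameko's isomorphism and then solve a linear system arising from the $GL_3$-action on an explicit basis. Since $n = (2^{s+u+1}-1) + (2^{u+1}-1) + (2^u-1)$ with three strictly positive summands when $u>0$, we have $\mu(n)=3$, so Theorem \ref{dlmd2} gives an isomorphism of $GL_3$-modules $(\widetilde{Sq}^0_*)^u : (QP_3)_n \xrightarrow{\cong} (QP_3)_{2^{s+1}}$. Since $\psi$ is a section of this iterated Kameko map at the level of monomial bases, it suffices to determine $(QP_3)_{2^{s+1}}^{GL_3}$ for $s \geqslant 1$ and then transport the invariants back via $\psi^u$. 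This reduces the proposition to three finite linear-algebra problems: $s=1$, $s=2$, and $s \geqslant 3$.

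In each case, I would take the listed admissible basis $\{v_{s,i}\}$ for $(QP_3)_{2^{s+1}}$, write a general element $\theta = \sum_i a_i v_{s,i}$ with $a_i \in \mathbb F_2$, and then impose the three conditions $\rho_j(\theta) \equiv \theta$ ($j=1,2,3$). The maps $\rho_1$ (swapping $x_1,x_2$) and $\rho_2$ (swapping $x_2,x_3$) act by permuting the admissible monomials up to hits, so they give cheap $\Sigma_3$-orbit conditions on the $a_i$. The nontrivial condition is $\rho_3$, which substitutes $x_1 \mapsto x_1 + x_2$: one expands $\rho_3(v_{s,i})$ by the binomial theorem and reduces each resulting monomial back to the admissible basis modulo $\mathcal A^+ P_3$ using the standard hit relations built into the classification of the basis $\{v_{s,i}\}$.

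Assembling the three constraints yields a homogeneous linear system in the $a_i$. For $s=1$ (dimension $8$) and $s \geqslant 3$ (dimension $14$), I expect the system to have only the zero solution, giving $(QP_3)_{2^{s+1}}^{GL_3}=0$. For $s=2$ (dimension $15$), the presence of the extra basis vector $v_{15} = x_1^3 x_2^4 x_3$ creates exactly one free parameter after imposing all symmetry and $\rho_3$-invariance conditions, and direct inspection identifies the spanning invariant as $\bar p_0 = v_{2,10} + v_{2,11} + v_{2,14} + v_{2,15}$; one then verifies $\rho_j(\bar p_0) \equiv \bar p_0$ for each $j$ by an explicit reduction to complete the proof.

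The main obstacle is the $\rho_3$-calculation in the $s=2$ case: the substitution $x_1 \mapsto x_1+x_2$ applied to the fifteen basis monomials in degree $8$ produces a long list of non-admissible monomials of degree $8$ in three variables, each of which must be pushed back to the basis using Steenrod-square identities. Keeping bookkeeping correct over $\mathbb F_2$ and ruling out all but the single one-parameter family requires a careful case-by-case reduction, but it is a finite mechanical check once the hit relations in $(P_3)_8$ are tabulated.
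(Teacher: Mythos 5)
Your proposal matches the paper's argument: the paper likewise uses $\mu(n)=3$ and the iterated Kameko isomorphism to reduce to $(QP_3)_{2^{s+1}}^{GL_3}$, then determines the invariants by a direct computation with $\rho_1,\rho_2,\rho_3$ on Kameko's explicit monomial basis (whose dimensions $8$, $15$, $14$ you have correctly), arriving at the same generator $\bar p_0 = v_{2,10}+v_{2,11}+v_{2,14}+v_{2,15}$ for $s=2$ and zero otherwise. The only content the paper adds beyond your outline is the phrase ``by a direct computation,'' so your plan is essentially a more explicit version of the same proof.
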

 We set 
\begin{align*}\bar c_u &= a_1^{(3.2^u-1)}a_2^{(4.2^u-1)}a_3^{(4.2^u-1)} + a_1^{(2.2^u-1)}a_2^{(5.2^u-1)}a_3^{(4.2^u-1)}\\
&\quad + a_1^{(2.2^u-1)}a_2^{(3.2^u-1)}a_3^{(6.2^u-1)} + a_1^{(2.2^u-1)}a_2^{(2.2^u-1)}a_3^{(7.2^u-1)} \end{align*}
is an element in $(P_3)^* = \Gamma(a_1,a_2,a_3)$. By a direct computation, we can see that 
$\bar c_u \in P((P_3)^*_{2^{t+u} +2^u - 3})$ and $\langle \psi^u(\bar p_0),\bar c_u\rangle =1$. So, we get
 $$ \mathbb F_2{\otimes}_{GL_3} P((P_3)^*_{n}) =
\begin{cases}\langle [\bar c_u]\rangle, &\text{if } s=2,\\ 0, &\text{if } s \ne 2.\end{cases}$$
For $u=0$, we have $\bar c_0 =  a_1^{(2)}a_2^{(3)}a_3^{(3)} + a_1^{(1)}a_2^{(4)}a_3^{(3)} + a_1^{(1)}a_2^{(2)}a_3^{(5)} + a_1^{(1)}a_2^{(1)}a_3^{(6)}$. 

A direct computation shows 
\begin{align*} \phi_3( a_1^{(2)}a_2^{(3)}a_3^{(3)}) &= \lambda_2\lambda_3^2 + \lambda_1\lambda_4\lambda_3 + \lambda_1\lambda_3\lambda_4,\\
\phi_3( a_1^{(1)}a_2^{(4)}a_3^{(3)}) &= \lambda_1\lambda_4\lambda_3 + \lambda_1\lambda_3\lambda_4 + \lambda_1\lambda_2\lambda_5,\\
\phi_3( a_1^{(1)}a_2^{(2)}a_3^{(5)}) &= \lambda_1\lambda_2\lambda_5+ \lambda_1^2\lambda_6 ,\\
\phi_3( a_1^{(1)}a_2^{(1)}a_3^{(6)}) &=  \lambda_1^2\lambda_6.
\end{align*}
Hence, we obtain 
$\phi_3(\bar c_0) = \lambda_2\lambda_3^2$. 
By Theorem \ref{md52}, we have $Tr_3([\bar c_0]) = [\lambda_2\lambda_3^2] = c_0$.
Since $[\bar c_u] = (\widetilde{Sq}_*^0)^u([\bar c_0])$, we get
$$Tr_3([\bar c_u]) = Tr_3((\widetilde{Sq}^0)^u([\bar c_0])) = ({Sq}^0)^u Tr_3([\bar c_0]) = ({Sq}^0)^u (c_0) = c_u.
$$
By Theorem \ref{md51}, we have $h_uh_{u+1}=0$. Hence,
$$\text{Ext}_{\mathcal A}^{3, 2^{s+u+1}+2^{u+1}+2^u}(\mathbb F_2, \mathbb F_2)  =  \begin{cases}\langle h_uh_{u+1}h_{u+3}, c_u \rangle= \langle  c_u \rangle, &\text{if }\ s=2,\\
\langle h_uh_{u+1}h_{s+u+1} \rangle = 0, &\text{if }\ s\ne 2.\end{cases}
$$

Theorem \ref{dlb} in this case follows from the above equalities.

\subsubsection {The case of the generic degree}\label{bab4}\

\medskip
 In this subsection, we consider the degree
$$n = 2^{s+t+u} + 2^{t+u} + 2^u -3,$$
with $s,t,u$ non-negative integers.

The subcases either $s=0$ or $t=0$ have been determined in Subsections  \ref{bab1} and \ref{bab2}. The case $s>0$ and $t=1$ has been determined in Subsection \ref{bab3}. So, we assume that $s>0$ and $t>1$. 

The iterated homomorphism
$$(\widetilde{Sq}^0_*)^{u} :(QP_3)_{2^{s+t+u}+ 2^{t+u} + 2^u - 3} \to (QP_3)_{2^{s+t}+2^t-2}$$  
is an isomorphism of $GL_3$-modules. So, we need only to compute $(QP_3)_{2^{s+t}+2^t-2}^{GL_3}$.

The subcase $s=1$. Then $n = 2^{t+1} + 2^{t}-2$.
 According to Kameko \cite{ka}, $(QP_3)_{n}$ is the vector space with a basis consisting of all the classes represented by the following monomials:

\medskip
\centerline{\begin{tabular}{lll}
$v_{t,1} = x_2^{2^t-1}x_3^{2^{t+1}-1}$& $v_{t,2} = x_2^{2^{t+1}-1}x_3^{2^t-1}$& $v_{t,3} = x_1^{2^t-1}x_3^{2^{t+1}-1}$\cr 
$v_{t,4} = x_1^{2^t-1}x_2^{2^{t+1}-1}$& $v_{t,5} = x_1^{2^{t+1}-1}x_3^{2^t-1}$& $v_{t,6} = x_1^{2^{t+1}-1}x_2^{2^t-1}.$\cr
$v_{t,7} = x_1x_2^{2^{t}-2}x_3^{2^{t+1}-1}$& $v_{t,8} = x_1x_2^{2^{t+1}-1}x_3^{2^{t}-2}$&$v_{t,9} = x_1^{2^{t+1}-1}x_2x_3^{2^{t}-2}$\cr
$v_{t,10} = x_1x_2^{2^{t}-1}x_3^{2^{t+1}-2}$& $v_{t,11} = x_1x_2^{2^{t+1}-2}x_3^{2^{t}-1}$&$v_{t,12} =x_1^{2^{t}-1} x_2x_3^{2^{t+1}-2}$\cr
$v_{t,13} = x_1^3x_2^{2^{t+1}-3}x_3^{2^{t}-2}$,&&\cr
\end{tabular}}

 \noindent  $v_{2,14} = x_1^3x_2^3x_3^4$ for $t=2$,  and \ $v_{t,14} = x_1^3x_2^{2^{t}-3}x_3^{2^{t+1}-2}$ for  $t > 2.$

\medskip
By a direct computation using the above basis, we obtain
\begin{props}\label{mdp442} For any integers $t> 1, u \geqslant 0$ and $ n= 2^{t+u+1} + 2^{t+u} + 2^u -3$, we have
$ (QP_3)_{n}^{GL_3} = 0.$
\end{props}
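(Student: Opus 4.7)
\medskip
\noindent\textbf{Proof plan for Proposition \ref{mdp442}.}
The plan is to reduce to the base case $u=0$ using Kameko's squaring operation, then carry out a direct invariant-theoretic computation on the 14-element admissible monomial basis of $(QP_3)_{2^{t+1}+2^t-2}$.

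First, since $\mu(2^{t+u+1}+2^{t+u}+2^u-3)=3$ for $t>1$, Theorem \ref{dlmd2} implies that
$$(\widetilde{Sq}^0_*)^u : (QP_3)_{2^{t+u+1}+2^{t+u}+2^u-3} \longrightarrow (QP_3)_{2^{t+1}+2^t-2}$$
is an isomorphism of $GL_3$-modules, so it suffices to prove $(QP_3)_{2^{t+1}+2^t-2}^{GL_3}=0$ for $t\geqslant 2$ (separately treating the minor basis discrepancy between $t=2$ and $t>2$ that arises only in $v_{t,14}$).

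Next, take an arbitrary class $[\theta]\in (QP_3)_{2^{t+1}+2^t-2}^{GL_3}$ and write $\theta=\sum_{i=1}^{14} a_i v_{t,i}$ with $a_i\in\mathbb F_2$. The conditions $\rho_i(\theta)\equiv\theta$ for $i=1,2$ (the $\Sigma_3$-invariance) permute the monomials $v_{t,i}$ and are straightforward: matching coefficients of monomials on the two sides should collapse the thirteen ``symmetric'' parameters $a_1,\ldots,a_{13}$ into a small number of independent parameters (grouping $\{v_{t,1},v_{t,2},\ldots,v_{t,6}\}$, $\{v_{t,7},v_{t,8},v_{t,9}\}$, $\{v_{t,10},v_{t,11},v_{t,12}\}$ and singletons, according to how $\Sigma_3$ acts on them) and will in particular force $a_{13}=0$ since $v_{t,13}$ lies alone in its $\Sigma_3$-orbit among admissible monomials.

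The heart of the argument is then the condition $\rho_3(\theta)\equiv\theta$, where $\rho_3$ sends $x_1\mapsto x_1+x_2$ and fixes $x_2,x_3$. For each admissible $v_{t,i}$, I expand $\rho_3(v_{t,i})$ by the binomial theorem into a sum of monomials of the form $x_1^{\alpha_1}x_2^{\alpha_2}x_3^{\alpha_3}$, many of which are non-admissible. Each non-admissible monomial must be rewritten modulo $\mathcal A^+P_3$ as a linear combination of the fourteen basis elements $v_{t,j}$, using standard hit-relation identities (Cartan formula applied to $Sq^{2^r}$ on carefully chosen polynomials, in the style of the tables in \cite{ka,su2}). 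Comparing coefficients of each $v_{t,j}$ in $\rho_3(\theta)-\theta\equiv 0$ yields a linear system over $\mathbb F_2$ on the surviving $\Sigma_3$-symmetric parameters, and the plan is to verify that this system has only the trivial solution, i.e.\ all $a_i=0$.

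The main obstacle is unavoidably this last computational step: the explicit reduction of the non-admissible monomials appearing in $\rho_3(v_{t,i})$ into the admissible basis. Because the exponent $2^{t+1}-2$ and $2^t-1$ depend on $t$, one must organize the hit-relation reductions uniformly in $t\geqslant 2$, handling $t=2$ separately because of the deviation in $v_{t,14}$ and because several exponents ($2^t-3$, $2^{t+1}-4$) become $1$ or $0$ rather than large. Once these reductions are in hand, the linear-algebra conclusion is immediate, and composing with $(\widetilde{Sq}^0_*)^u$ transports the vanishing back to the original degree $n=2^{t+u+1}+2^{t+u}+2^u-3$.
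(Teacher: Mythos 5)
Your plan coincides with the paper's own proof: the paper likewise uses the iterated Kameko isomorphism $(\widetilde{Sq}^0_*)^u$ to reduce to the degree $2^{t+1}+2^t-2$ and then verifies $GL_3$-invariance by a direct computation on Kameko's $14$-monomial basis, treating $t=2$ separately because of $v_{2,14}$. One small caution for the execution: even for the transpositions $\rho_1,\rho_2$ the images of some basis monomials (e.g.\ $\rho_1(v_{t,7})=x_1^{2^t-2}x_2x_3^{2^{t+1}-1}$) are not admissible, so hit-relation reductions are needed there as well, not only for $\rho_3$.
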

By Theorem \ref{md51}  $h_{t+u}h_{t+u+1} =0$, so we have 
$$\text{Ext}_{\mathcal A}^{3, 2^{t+u+1}+2^{t+u}+2^u}(\mathbb F_2, \mathbb F_2)  =  \langle h_uh_{t+u}h_{t+u+1} \rangle =0.$$ 
Hence, from the above equalities, we can see that
$$Tr_3 :\mathbb F_2{\otimes}_{GL_3} P((P_3)^*_{2^{t+u+1} +2^{t+u}+2^u - 3}) \longrightarrow \text{\rm Ext}_{\mathcal A}^{3,2^{t+u+1}+2^{t+u} + 2^u}(\mathbb F_2,\mathbb F_2)$$ 
is a trivial isomorphism.

Now, suppose that $s,t>1$ and $n = 2^{s+t} + 2^t - 2$. From the results of Kameko \cite{ka}, we see that
 $(QP_3)_{n}$  is the vector space of dimension 21 with a basis consisting of all the classes represented by the following monomials:

\medskip
\centerline{\begin{tabular}{ll}
$v_{s,t,1} = x_2^{2^t - 1}x_3^{2^{s+t} - 1}$&
$v_{s,t,2} = x_2^{2^{s+t} - 1}x_3^{2^t - 1}$\cr
$v_{s,t,3} = x_1^{2^t - 1}x_3^{2^{s+t} - 1}$&
$v_{s,t,4} = x_1^{2^t - 1}x_2^{2^{s+t} - 1}$\cr
$v_{s,t,5} = x_1^{2^{s+t} - 1}x_3^{2^t - 1}$&
$v_{s,t,6} = x_1^{2^{s+t} - 1}x_2^{2^t - 1}$\cr
$v_{s,t,7} = x_2^{2^{t+1} - 1}x_3^{2^{s+t} - 2^t - 1}$&
$v_{s,t,8} = x_1^{2^{t+1} - 1}x_3^{2^{s+t} - 2^t - 1}$\cr
$v_{s,t,9} = x_1^{2^{t+1} - 1}x_2^{2^{s+t} - 2^t - 1}$&
$v_{s,t,10} = x_1x_2^{2^t - 2}x_3^{2^{s+t} - 1}$\cr
$v_{s,t,11} = x_1x_2^{2^{s+t} - 1}x_3^{2^t - 2}$&
$v_{s,t,12} = x_1^{2^{s+t} - 1}x_2x_3^{2^t - 2}$\cr
$v_{s,t,13} = x_1x_2^{2^t - 1}x_3^{2^{s+t} - 2}$&
$v_{s,t,14} = x_1x_2^{2^{s+t} - 2}x_3^{2^t - 1}$\cr
$v_{s,t,15} = x_1^{2^t - 1}x_2x_3^{2^{s+t} - 2}$&
$v_{s,t,16} = x_1x_2^{2^{t+1} - 2}x_3^{2^{s+t} - 2^t - 1}$\cr
$v_{s,t,17} = x_1x_2^{2^{t+1} - 1}x_3^{2^{s+t} - 2^t - 2}$&
$v_{s,t,18} = x_1^{2^{t+1} - 1}x_2x_3^{2^{s+t} - 2^t - 2}$\cr
$v_{s,t,19} = x_1^{3}x_2^{2^{s+t} - 3}x_3^{2^t - 2}$&
$v_{s,t,20} = x_1^{3}x_2^{2^{t+1} - 3}x_3^{2^{s+t} - 2^t - 2}$,\cr
\end{tabular}}

\medskip\noindent
$v_{s,2,21} = x_1^3x_2^3x_3^{2^{s+2}-4}$, for  $t=2$\ and $v_{s,t,21} = x_1^{3}x_2^{2^t - 3}x_3^{2^{s+t} - 2}$ for  $t > 2.$

\medskip
We set 
$$p_{3,s,t,u}=\begin{cases}\sum_{1\leqslant j \leqslant 21, j\ne 13,15}\psi^u(v_{s,2,j}),& \text{ if } t=2,\\ 
\sum_{1\leqslant j \leqslant 21}\psi^u(v_{s,t,j}),& \text{ if } t>2.
\end{cases} $$
By a direct computation using this basis, we get
\begin{props}\label{mdp444} For any integers $s,t > 1, u \geqslant 0$ and $ n= 2^{s+ t+u} + 2^{t+u} + 2^u -3$, we have
$ (QP_3)_{n}^{GL_3} = \langle[p_{3,s,t,u}]\rangle.$
\end{props}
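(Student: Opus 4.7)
The plan is first to invoke Theorem~\ref{dlmd2}: under the hypothesis $s,t > 1$ we have $\mu(n) = 3$, so the iterated Kameko homomorphism
$(\widetilde{Sq}^0_*)^u : (QP_3)_n \to (QP_3)_{2^{s+t}+2^t-2}$
is an isomorphism of $GL_3$-modules. It therefore suffices to exhibit a one-dimensional invariant subspace in degree $2^{s+t}+2^t-2$ and then pull back through $\psi^u$ to obtain $[p_{3,s,t,u}]$. The explicit admissible basis of $21$ monomials $v_{s,t,j}$ listed above provides the starting point.

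Next, I would write a general element as $f = \sum_{j=1}^{21} \gamma_j v_{s,t,j}$ with $\gamma_j \in \mathbb{F}_2$, and impose the three invariance conditions $\rho_i(f) \equiv f$ for $i = 1,2,3$ from the criterion recalled in Section~\ref{s2}. The symmetric-group generators $\rho_1$ (swapping $x_1,x_2$) and $\rho_2$ (swapping $x_2,x_3$) permute most of the basis monomials and send the remainder to other admissibles by direct inspection; matching coefficients produces a first batch of linear relations grouping the $\gamma_j$'s according to the $\Sigma_3$-orbit structure and thereby cutting the parameter count sharply.

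The main obstacle is the analysis of $\rho_3$, which sends $x_1 \mapsto x_1 + x_2$. Expanding $\rho_3(v_{s,t,j}) = (x_1+x_2)^{a_j}x_2^{b_j}x_3^{c_j}$ via the binomial theorem produces sums of monomials, most of them inadmissible, which must be reduced modulo $\mathcal{A}^+P_3$ using the known hit polynomials of $P_3$ in this degree before their coefficients can be compared against those of $f$. Bookkeeping the parities $\binom{a_j}{k}\pmod 2$ and the explicit Steenrod rewrites of each nonadmissible image is the heaviest computational step, and it is where the proof genuinely needs the detailed hit-problem results for $P_3$.

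Finally, I would verify directly that the claimed representative $p_{3,s,t,0}$ is $GL_3$-invariant, and then show that the combined linear system from $\rho_1, \rho_2, \rho_3$ has exactly a one-dimensional solution space over $\mathbb{F}_2$, necessarily spanned by $[p_{3,s,t,0}]$. The subcase $t = 2$ must be treated separately because the admissible basis contains the exceptional monomial $v_{s,2,21} = x_1^3x_2^3x_3^{2^{s+2}-4}$ in place of its generic counterpart; the hit reductions behave differently here, and the calculation produces precisely the omission of $v_{s,2,13}$ and $v_{s,2,15}$ that appears in the definition of $p_{3,s,2,0}$. Applying $\psi^u$ to the resulting generator and invoking the Kameko isomorphism from the first paragraph then yields the statement in arbitrary $u$.
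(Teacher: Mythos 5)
Your proposal is correct and follows the paper's own route: the paper likewise reduces via the iterated Kameko isomorphism to degree $2^{s+t}+2^t-2$, takes the listed basis of $21$ monomials $v_{s,t,j}$, and obtains the result ``by a direct computation using this basis,'' which is exactly the linear system from $\rho_1,\rho_2,\rho_3$ modulo hit elements that you describe, including the separate treatment of $t=2$. (Only a cosmetic imprecision: for $u=0$ one has $\mu(n)=2$ rather than $3$, but then the iterated Kameko map is the identity, so nothing is affected.)
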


By Theorem \ref{md51}, we have 
$$\text{Ext}_{\mathcal A}^{3, 2^{s+t+u}+2^{t+u}+2^u}(\mathbb F_2, \mathbb F_2)  =  \langle h_uh_{t+u}h_{s+t+u} \rangle.$$
Note that $\psi^u(v_{s,t,1}) = x_1^{2^u-1}x_2^{2^{t+u}-1}x_3^{2^{s+t+u}-1}$. Consider the element 
$$q_{3,s,t,u} = a_1^{(2^u-1)}a_2^{(2^{t+u}-1)}a_3^{(2^{s+t+u}-1)} \in \mathbb F_2{\otimes}_{GL_3} P((P_3)^*_{n}).$$
Since $\langle p_{3,s,t,u},q_{3,s,t,u}\rangle =1$, from Proposition \ref{mdp444}, we obtain 
$$\mathbb F_2{\otimes}_{GL_3} P((P_3)^*_{n}) = \langle [q_{3,s,t,u}]\rangle .$$

It is easy to see that $\phi_3(q_{3,s,t,u}) = \lambda_u\lambda_{t+u}\lambda_{s+t+u}$, hence using Theorem \ref{md52} we get 
$$Tr_3 ([q_{3,s,t,u}]) = [\lambda_u\lambda_{t+u}\lambda_{s+t+u}] = h_uh_{t+u}h_{s+t+u}.$$
Theorem \ref{dlb} is completely proved.

\section{Determination of $Tr_4$ in some internal degrees}\label{s4}

In this section, we explicitly determined $Tr_4$ in some internal degrees. Our main result is the following.

\begin{thm}\label{dlm} Let $s$ be a positive integer and let $n$ be one of the degrees $2^{s+1}- 1$, $2^{s+1}- 2$, $2^{s+1}- 3$. If $n \ne 61$ and $n \ne 126$, then the homomorphism
$$Tr_4 :\mathbb F_2{\otimes}_{GL_4} P((P_4)_n^*) \longrightarrow \text{\rm Ext}_{\mathcal A}^{4,n+4}(\mathbb F_2,\mathbb F_2)$$ 
is an isomorphism. If either $n = 61$ or $n = 126$, then $Tr_4$ is a monomorphism but it is not an epimorphism.
\end{thm}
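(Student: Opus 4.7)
I would prove Theorem \ref{dlm} by treating the three degree families $n = 2^{s+1}-1$, $n = 2^{s+1}-2$, and $n = 2^{s+1}-3$ separately, following the template of Section \ref{s3} but now with $k = 4$. For each family I would compute both $\mathbb F_2 \otimes_{GL_4} P((P_4)^*_n)$ and $\text{Ext}^{4, n+4}_{\mathcal A}(\mathbb F_2,\mathbb F_2)$ explicitly, then identify $Tr_4$ via the homomorphism $\phi_4$ of Theorem \ref{md52} and compare, exactly as was done for $k = 3$ in Section \ref{s3}.

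\medskip

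Concretely, I would first invoke the author's prior determination of $(QP_4)_n$ in these three degrees from \cite{su2} and \cite{su5}, which supplies an explicit monomial basis of each $(QP_4)_n$. Applying the $GL_4$-generators $\rho_1, \rho_2, \rho_3, \rho_4$ to an arbitrary linear combination of basis classes and imposing the hit-equivalences $\rho_i(f) \equiv f$ modulo $\mathcal A^+ P_4$ yields a linear system whose solution space is $(QP_4)_n^{GL_4}$. Dualising via the natural pairing between the monomial basis of $(P_4)_n$ and the divided-power basis of $(P_4)^*_n$ then describes $\mathbb F_2 \otimes_{GL_4} P((P_4)^*_n)$. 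I would next apply the inductive formula for $\phi_4$ to each primitive generator, obtain an explicit cycle in $\Lambda^4$, and read off the corresponding Ext class using Theorem \ref{md51}. The only candidate decomposables in the three internal degrees are $h_0^3 h_{s+1}$ in $\text{Ext}^{4, 2^{s+1}+3}_{\mathcal A}$, $h_0^2 h_s^2$ in $\text{Ext}^{4, 2^{s+1}+2}_{\mathcal A}$, and a product of the form $h_0 h_{s-1}^2 h_{s+1}$ together with possibly $h_0 c_{s-1}$ in $\text{Ext}^{4, 2^{s+1}+1}_{\mathcal A}$; each is either forced to zero by the relations $h_i h_{i+1} = h_i h_{i+2}^2 = h_j c_i = 0$ at the relevant indices, or matches the $\phi_4$-image of a unique primitive. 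For small $s$ the identifications can be checked by direct computation in $\Lambda^4$; to propagate to arbitrary $s$ I would use the compatibility $Tr_4 \circ \widetilde{Sq}^0_* = Sq^0 \circ Tr_4$ together with Kameko's Theorem \ref{dlmd2} in each degree where the hypothesis $\mu(2m+4) = 4$ holds, in the same spirit as Subsection \ref{bab3}.

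\medskip

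The hard part will be the second step above, the determination of $(QP_4)_n^{GL_4}$. The bases of $(QP_4)_n$ from \cite{su2, su5} are substantially larger than in the $k = 3$ situation, and the action of $\rho_4$ (the substitution $x_1 \mapsto x_1 + x_2$) produces many cross-terms involving $(x_1+x_2)^a$ that must be expanded and then reduced modulo hit polynomials. A lengthy case analysis, analogous to but considerably more intricate than Subsection \ref{bab4}, will be required, with repeated appeal to hit-equivalence identities in order to simplify cross-terms. Once $(QP_4)_n^{GL_4}$ has been pinned down, the matching with $\text{Ext}^{4, n+4}_{\mathcal A}(\mathbb F_2, \mathbb F_2)$ on the Ext side becomes essentially mechanical.
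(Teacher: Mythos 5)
Your overall strategy is the paper's: compute $(QP_4)_n^{GL_4}$ from the known monomial bases of \cite{su5}, dualise, and evaluate $Tr_4$ via $\phi_4$. But two concrete points in your plan would fail. First, your enumeration of the target groups is wrong in the one case that matters most: for $n = 2^{s+1}-2$ with $s=3$ the only decomposable candidate $h_0^2h_3^2$ vanishes by Lin's relation $h_i^2h_{i+3}^2=0$, yet $\text{Ext}_{\mathcal A}^{4,18}(\mathbb F_2,\mathbb F_2) = \langle d_0\rangle \neq 0$ and the domain $\mathbb F_2\otimes_{GL_4}P((P_4)^*_{14})$ is one-dimensional. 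You have omitted the indecomposable $Sq^0$-families of Theorem \ref{md51} from your list of candidates, and the element $d_0$ does occur in these degrees. Detecting it is not ``essentially mechanical'': one must exhibit an explicit primitive $q_{4,3}$ and verify by a lambda-algebra computation that $\phi_4(q_{4,3}) = \bar d_0 + \delta(\lambda_1\lambda_9\lambda_3^2+\lambda_1\lambda_3\lambda_9\lambda_3)$, so that $Tr_4([q_{4,3}]) = d_0$. This is the hardest step of the argument and your proposal does not anticipate it; without it the case $s=3$, $n=14$ is simply unproved.

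Second, your plan to ``propagate to arbitrary $s$'' via Kameko's Theorem \ref{dlmd2} and the relation $Tr_4\circ\widetilde{Sq}^0_* = Sq^0\circ Tr_4$ cannot work here: the degrees $2^{s+1}-1$ and $2^{s+1}-3$ are odd, hence never of the form $2m+4$, and for $2^{s+1}-2 = 2(2^s-1)$ one has $\mu(2^{s+1}-2) = 2 \neq 4$, so Kameko's homomorphism is not an isomorphism in any of the three families and different values of $s$ are not linked by it. The invariant computation must instead be carried out uniformly in $s$ (as the paper does, via a $\Sigma_4$-module decomposition of the basis followed by the $\rho_4$-condition); the only legitimate use of Kameko's map here is that $(\widetilde{Sq}^0_*)_{2^{s+1}-2}$ is an \emph{epimorphism} of $GL_4$-modules onto $(QP_4)_{2^s-3}$, which together with the vanishing of $(QP_4)_{2^s-3}^{GL_4}$ reduces the invariant computation in degree $2^{s+1}-2$ to the kernel of Kameko's map. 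You should replace the propagation step with this reduction and a direct, $s$-uniform invariant calculation.
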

 We prove the theorem by computing the space $(QP_4)_n^{GL_4}$. 

\subsection{The case $n = 2^{s+1}-3$}

\begin{props}[see \cite{su1,su5}]\label{mdd41}Let $n = 2^{s+1}-3$ with $s$ a positive integer. Then, the dimension of the $\mathbb F_2$-vector space $(QP_4)_{n}$ is determined by the following table:

\medskip
\centerline{\begin{tabular}{c|cccc}
$n = 2^{s+1}-3$&$s=1$ & $s=2$ & $s=3$& $s\geqslant 4$\cr
\hline
\ $\dim(QP_4)_n$ & $4$ & $15$ & $35$ &$45$ \cr
\end{tabular}}
\end{props}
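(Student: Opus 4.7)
The plan is to compute $\dim (QP_4)_n$ for $n = 2^{s+1}-3$ by constructing an explicit $\mathbb F_2$-basis of admissible monomials and counting. Since $n$ is odd for every $s \geqslant 2$ (and equals $1$ for $s=1$), there is no integer $m$ with $n = 2m+4$, so Kameko's homomorphism does not reduce the problem to a lower degree; the computation must be carried out directly in degree $n$ using the standard hit-problem toolkit.

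My first step is to split
\[
(QP_4)_n \;=\; (QP_4^0)_n \,\oplus\, (QP_4^+)_n,
\]
where $(P_4^+)_n$ is spanned by monomials in which all four variables occur with positive exponent. The summand $(QP_4^0)_n$ is completely determined by inclusion--exclusion from the values $\dim(QP_j)_n$ for $j \leqslant 3$ already computed in Section~\ref{s3}. This already accounts for the ``boundary'' part of the count and leaves a well-defined counting problem for $(QP_4^+)_n$.

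My second step handles $(QP_4^+)_n$ by a systematic enumeration. For $s = 1$, the basis is just $\{x_1, x_2, x_3, x_4\}$ and the result is immediate. For $s = 2$ and $s = 3$, I would list all monomials of degree $n$ in $P_4$ sorted by their weight sequence, and prune the list using (i) Singer's spike criterion, (ii) Wood's theorem on the vanishing when the weight exceeds $k$, and (iii) explicit Cartan-formula identities of the shape $x^A = \sum_i Sq^{i}(x^{B_i}) \pmod{\text{lower terms}}$ that express specific monomials as hit. For the generic range $s \geqslant 4$, I would argue that the admissible monomials in $(P_4^+)_n$ organize into a finite list of ``stable shapes'' $x_1^{a_1}x_2^{a_2}x_3^{a_3}x_4^{a_4}$ whose exponents depend on $s$ only through a single factor $2^s$, so that the count is the same for all $s \geqslant 4$. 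The stabilization is the key conceptual point.

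The main obstacle is twofold. First, one must supply enough explicit hit relations to reduce the candidate set down to exactly $45$ monomials in the generic range; each such relation is a Cartan-formula manipulation and finding the right sequence of $Sq^i$'s for every inadmissible shape is the most labor-intensive part. Second, one must verify linear independence of the $45$ proposed classes modulo $\mathcal A^+ P_4$, which is done by exhibiting dual elements in $(P_4)^*$ that separate them (the natural pairing used in the previous section is ideally suited for this). The delicate point is the transition $s = 3 \rightarrow s \geqslant 4$: when $s = 3$ certain exponent patterns collide so that a block of $10$ otherwise admissible monomials can be expressed as hit, explaining the jump from $35$ to the stable value $45$. Isolating precisely which monomials become hit at $s = 3$ and verifying that no analogous collapse happens for larger $s$ is the crux of the argument.
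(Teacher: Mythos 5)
Your outline reproduces the standard strategy, but you should be aware that the paper itself does not prove Proposition \ref{mdd41} at all: it imports the dimension table from \cite{su5} and then simply lists the explicit monomial basis $a_{s,j}$. So the relevant comparison is with the proof in \cite{su5}, and there your plan is indeed the method used: the splitting $QP_4 \cong QP_4^0 \oplus QP_4^+$, reduction of the $QP_4^0$ part to the known cases $k\leqslant 3$, enumeration of admissible monomials in $QP_4^+$ pruned by the criteria of Singer and Kameko (your appeal to ``Wood's theorem on the vanishing when the weight exceeds $k$'' should read: every monomial of degree $n$ with $\mu(n)>k$ is hit; here $\mu(2^{s+1}-3)=3<4$, so Wood's theorem gives nothing and the work falls entirely on explicit hit relations), and finally linear independence via the pairing with $P((P_4)^*)$ or via restriction homomorphisms.

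The genuine gap is that what you have written is a program, not a proof. The entire mathematical content of the proposition lies in the steps you defer: producing the Cartan/$\chi$-trick relations that kill every inadmissible monomial of $(P_4^+)_n$, and certifying that the surviving $45$ (resp.\ $4$, $15$, $35$) classes are independent. Without at least one worked degree, nothing in the proposal distinguishes the correct answer $45$ from, say, $44$ or $46$. Two specific points in your sketch are also imprecise. First, the transition is not that ``a block of $10$ otherwise admissible monomials become hit at $s=3$''; rather, at $s=3$ the generic exponent patterns $a_{s,31},\ldots,a_{s,45}$ degenerate (e.g.\ $2^{s-1}-3=1$, $2^{s-1}-5<0$), so several shapes cease to exist or collide, and only $5$ classes replace the generic $15$; the net count $35=45-10$ is right but the mechanism is collision/degeneration, not a new family of hit relations. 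Second, the ``stable shapes'' do not all stabilize at $s=4$: the paper's basis has exceptional monomials $a_{4,44},a_{4,45}$ differing from the $s\geqslant 5$ pattern, so the dimension stabilizes at $s=4$ while the basis only stabilizes at $s=5$; your stabilization argument must account for this. As it stands the proposal identifies the right toolkit but establishes none of the claimed dimensions.
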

A basis for $(QP_4)_n$ is the set consisting of all the classes represented monomials $a_j= a_{s,j}$ which are determined as follows:

\medskip
For $s = 1$, $a_{1,1} = x_4,\ a_{1,2} = x_3,\ a_{1,3} = x_2, a_{1,4} = x_1$.

\medskip
For $s \geqslant 2$, 

\medskip
 \centerline{\begin{tabular}{lll}
$a_{s,1} = x_2^{2^{s-1}-1}x_3^{2^{s-1}-1}x_4^{2^s-1}$ &  $a_{s,2} = x_2^{2^{s-1}-1}x_3^{2^s-1}x_4^{2^{s-1}-1}$\cr  
$a_{s,3} = x_2^{2^s-1}x_3^{2^{s-1}-1}x_4^{2^{s-1}-1}$ &  $a_{s,4} = x_1^{2^{s-1}-1}x_3^{2^{s-1}-1}x_4^{2^s-1}$\cr  
$a_{s,5} = x_1^{2^{s-1}-1}x_3^{2^s-1}x_4^{2^{s-1}-1}$ &  $a_{s,6} = x_1^{2^{s-1}-1}x_2^{2^{s-1}-1}x_4^{2^s-1}$\cr  $a_{s,7} = x_1^{2^{s-1}-1}x_2^{2^{s-1}-1}x_3^{2^s-1}$ &  $a_{s,8} = x_1^{2^{s-1}-1}x_2^{2^s-1}x_4^{2^{s-1}-1}$\cr  
$a_{s,9} = x_1^{2^{s-1}-1}x_2^{2^s-1}x_3^{2^{s-1}-1}$ &  $a_{s,10} = x_1^{2^s-1}x_3^{2^{s-1}-1}x_4^{2^{s-1}-1}$\cr  
$a_{s,11} = x_1^{2^s-1}x_2^{2^{s-1}-1}x_4^{2^{s-1}-1}$ &  $a_{s,12} = x_1^{2^s-1}x_2^{2^{s-1}-1}x_3^{2^{s-1}-1}$\cr  
\end{tabular}}

\medskip
For $s = 2$, $a_{2,13} = x_1x_2x_3x_4^{2}$,\  $a_{2,14} = x_1x_2x_3^{2}x_4$,\  $a_{2,15} = x_1x_2^{2}x_3x_4$. 

\medskip
For $s \geqslant 3$,

\medskip
 \centerline{\begin{tabular}{ll}
$a_{s,13} = x_1x_2^{2^{s-1}-2}x_3^{2^{s-1}-1}x_4^{2^s-1}$ &  $a_{s,14} = x_1x_2^{2^{s-1}-2}x_3^{2^s-1}x_4^{2^{s-1}-1}$\cr  
$a_{s,15} = x_1x_2^{2^{s-1}-1}x_3^{2^{s-1}-2}x_4^{2^s-1}$ &  $a_{s,16} = x_1x_2^{2^{s-1}-1}x_3^{2^s-1}x_4^{2^{s-1}-2}$\cr  
$a_{s,17} = x_1x_2^{2^s-1}x_3^{2^{s-1}-2}x_4^{2^{s-1}-1}$ &  $a_{s,18} = x_1x_2^{2^s-1}x_3^{2^{s-1}-1}x_4^{2^{s-1}-2}$\cr  
$a_{s,19} = x_1^{2^{s-1}-1}x_2x_3^{2^{s-1}-2}x_4^{2^s-1}$ &  $a_{s,20} = x_1^{2^{s-1}-1}x_2x_3^{2^s-1}x_4^{2^{s-1}-2}$\cr  
$a_{s,21} = x_1^{2^{s-1}-1}x_2^{2^s-1}x_3x_4^{2^{s-1}-2}$ &  $a_{s,22} = x_1^{2^s-1}x_2x_3^{2^{s-1}-2}x_4^{2^{s-1}-1}$\cr  
$a_{s,23} = x_1^{2^s-1}x_2x_3^{2^{s-1}-1}x_4^{2^{s-1}-2}$ &  $a_{s,24} = x_1^{2^s-1}x_2^{2^{s-1}-1}x_3x_4^{2^{s-1}-2}$\cr  
$a_{s,25} = x_1x_2^{2^{s-1}-1}x_3^{2^{s-1}-1}x_4^{2^s-2}$ &  $a_{s,26} = x_1x_2^{2^{s-1}-1}x_3^{2^s-2}x_4^{2^{s-1}-1}$\cr  
$a_{s,27} = x_1x_2^{2^s-2}x_3^{2^{s-1}-1}x_4^{2^{s-1}-1}$ &  $a_{s,28} = x_1^{2^{s-1}-1}x_2x_3^{2^{s-1}-1}x_4^{2^s-2}$\cr  
$a_{s,29} = x_1^{2^{s-1}-1}x_2x_3^{2^s-2}x_4^{2^{s-1}-1}$ &  $a_{s,30} = x_1^{2^{s-1}-1}x_2^{2^{s-1}-1}x_3x_4^{2^s-2}$\cr  
\end{tabular}}

\medskip
For $s = 3$, 

\medskip
 \centerline{\begin{tabular}{lll}
$a_{3,31} = x_1^{3}x_2^{3}x_3^{5}x_4^{2}$ &  $a_{3,32} = x_1^{3}x_2^{5}x_3^{2}x_4^{3}$ &  $a_{3,33} = x_1^{3}x_2^{5}x_3^{3}x_4^{2}$\cr
$a_{3,34} = x_1^{3}x_2^{3}x_3^{3}x_4^{4}$&  $a_{3,35} = x_1^{3}x_2^{3}x_3^{4}x_4^{3}$ &  \cr 
\end{tabular}}

\medskip
For $s \geqslant 4$,

\medskip
 \centerline{\begin{tabular}{ll}
$a_{s,31} = x_1^{3}x_2^{2^{s-1}-3}x_3^{2^{s-1}-2}x_4^{2^s-1}$ &  $a_{s,32} = x_1^{3}x_2^{2^{s-1}-3}x_3^{2^s-1}x_4^{2^{s-1}-2}$\cr  
$a_{s,33} = x_1^{3}x_2^{2^s-1}x_3^{2^{s-1}-3}x_4^{2^{s-1}-2}$ &  $a_{s,34} = x_1^{2^s-1}x_2^{3}x_3^{2^{s-1}-3}x_4^{2^{s-1}-2}$\cr  
$a_{s,35} = x_1^{3}x_2^{2^{s-1}-3}x_3^{2^{s-1}-1}x_4^{2^s-2}$ &  $a_{s,36} = x_1^{3}x_2^{2^{s-1}-3}x_3^{2^s-2}x_4^{2^{s-1}-1}$\cr  
$a_{s,37} = x_1^{3}x_2^{2^{s-1}-1}x_3^{2^{s-1}-3}x_4^{2^s-2}$ &  $a_{s,38} = x_1^{2^{s-1}-1}x_2^{3}x_3^{2^{s-1}-3}x_4^{2^s-2}$\cr  
$a_{s,39} = x_1^{3}x_2^{2^{s-1}-1}x_3^{2^s-3}x_4^{2^{s-1}-2}$ &  $a_{s,40} = x_1^{3}x_2^{2^s-3}x_3^{2^{s-1}-2}x_4^{2^{s-1}-1}$\cr  
$a_{s,41} = x_1^{3}x_2^{2^s-3}x_3^{2^{s-1}-1}x_4^{2^{s-1}-2}$ &  $a_{s,42} = x_1^{2^{s-1}-1}x_2^{3}x_3^{2^s-3}x_4^{2^{s-1}-2}$\cr  
$a_{s,43} = x_1^{7}x_2^{2^s-5}x_3^{2^{s-1}-3}x_4^{2^{s-1}-2}$ & \cr
\end{tabular}}

\medskip
For $s = 4$,\ $a_{4,44} = x_1^{7}x_2^{7}x_3^{9}x_4^{6}$,\  $a_{4,45} = x_1^{7}x_2^{7}x_3^{7}x_4^{8}$.

\medskip
For $s \geqslant 5$, $a_{s,44} = x_1^{7}x_2^{2^{s-1}-5}x_3^{2^s-3}x_4^{2^{s-1}-2}$,\  $a_{s,45} = x_1^{7}x_2^{2^{s-1}-5}x_3^{2^{s-1}-3}x_4^{2^s-2}$.

\medskip
\begin{props}\label{mdd411} Let $s$ be a positive integer. Then,
$(QP_4)_{2^{s+1}-3}^{GL_4} = 0.$
\end{props}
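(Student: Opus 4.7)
The plan is to compute $(QP_4)_n^{GL_4}$ directly from the explicit monomial basis $\{a_{s,j}\}$ of Proposition \ref{mdd41}. By the discussion at the end of Section \ref{s2}, $[f] \in (QP_4)_n$ is $GL_4$-invariant if and only if $\rho_i(f) \equiv f$ for $1 \leqslant i \leqslant 4$, where $\rho_1, \rho_2, \rho_3$ are the adjacent transpositions generating $\Sigma_4$ and $\rho_4$ is the shear $x_1 \mapsto x_1+x_2$. I would write a generic class as $[f] = \sum_j \gamma_j [a_{s,j}]$ with $\gamma_j \in \mathbb F_2$ and aim to show the system $\rho_i([f]) = [f]$ forces every $\gamma_j = 0$.

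The first step is to impose $\Sigma_4$-invariance. Since $\rho_1,\rho_2,\rho_3$ merely permute the variables, each $\rho_i(a_{s,j})$ is again a monomial; if it is not already in the admissible basis it must be rewritten modulo $\mathcal A^+ P_4$ as a sum of basis monomials, using the hit relations obtained in \cite{su5} (implicitly encoded in the proof that $\{a_{s,j}\}$ is a basis). Equating coefficients orbit-by-orbit produces a small system of linear equations cutting $(QP_4)_n$ down to the $\Sigma_4$-invariant subspace $(QP_4)_n^{\Sigma_4}$. For $s=1$ the argument is essentially immediate; for $s=2,3$ the four blocks of basis elements ($a_{s,1}$--$a_{s,12}$, the middle monomials, the cubic monomials, and the extras $a_{3,31}$--$a_{3,35}$) are each permuted by $\Sigma_4$ in a transparent way; for $s\geqslant 4$ the blocks $a_{s,1}$--$a_{s,12}$, $a_{s,13}$--$a_{s,30}$, $a_{s,31}$--$a_{s,43}$, and the stable tail $a_{s,44},a_{s,45}$ each group into a few $\Sigma_4$-orbits.

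The second, decisive step is to impose $\rho_4(f) \equiv f$. For each $\Sigma_4$-invariant combination surviving Step 1, I would compute $\rho_4(a_{s,j}) = a_{s,j}\big|_{x_1 \mapsto x_1+x_2}$ by expanding $(x_1+x_2)^{e_1}$ via the binomial theorem, then reduce the resulting monomials to the admissible basis modulo hit elements. Comparing coefficients of $\rho_4([f]) - [f]$ in the basis yields a further set of linear constraints; the conclusion of the proposition is that these constraints, combined with those from Step 1, admit only the trivial solution.

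The main obstacle is the bookkeeping in Step 2: expanding $\rho_4$ of each basis monomial and reducing it back to the basis $\{a_{s,j}\}$ requires an explicit list of hit relations in degree $2^{s+1}-3$, which is precisely the intricate computation carried out in \cite{su5}. I would handle the four ranges $s=1$, $s=2$, $s=3$, and $s\geqslant 4$ separately (since the basis itself changes at these thresholds), and within the stable range $s\geqslant 4$ argue uniformly, paying special attention to the two extra generators $a_{4,44},a_{4,45}$ in the boundary case $s=4$ and to the exceptional monomials in the $s=2,3$ families. Once the vanishing of all $\gamma_j$ is confirmed in each range, Proposition \ref{mdd411} follows.
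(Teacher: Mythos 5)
Your proposal is correct and follows essentially the same route as the paper: the paper likewise first cuts down to the $\Sigma_4$-invariants (organizing the basis monomials $a_{s,j}$ into $\Sigma_4$-submodules and showing each contributes a one-dimensional invariant space spanned by an orbit sum $p_{4,s,j}$), and then applies $\rho_4$ to the resulting combination $\sum_j\gamma_j p_{4,s,j}$, reducing modulo hit elements to force all $\gamma_j=0$. The only difference is presentational — the paper packages your ``orbit-by-orbit'' Step 1 as an explicit direct-summand decomposition in Lemma \ref{bd2m3} and treats $s\geqslant 5$ in detail, leaving the smaller $s$ to analogous computations.
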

For simplicity, we prove the proposition in detail for $s \geqslant 5$. The other cases can be proved by the similar computations.

 For any monomials $z_1, z_2, \ldots, z_m$ in $P_k$ and for a subgroup $G\subset GL_k$, we denote 
$G(z_1, z_2, \ldots, z_m)$ the $G$-submodule of $QP_k$ generated by the set $\{[z_i] : 1 \leqslant i \leqslant m\}$. We have the following.
\begin{lems}\label{bd2m3} {\rm i)} For any $s\geqslant 5$, there is a direct summand decomposition of the $\Sigma_4$-modules:
$$(QP_4)_{2^{s+1}-3} = \Sigma_4(a_{s,1})\oplus \Sigma_4(a_{s,13}) \oplus \Sigma_4(a_{s,31}) \oplus \Sigma_4(a_{s,25},a_{s,35},a_{s,43}).$$

{\rm ii)} $\Sigma_4(a_{s,1})^{\Sigma_4} = \langle [p_{4,s,1}]\rangle$, with $p_{4,s,1}= \sum_{j=1}^{12}a_{s,j}$.

{\rm iii)} $\Sigma_4(a_{s,13})^{\Sigma_4} = \langle [p_{4,s,2}]\rangle$, with $p_{4,s,2}= \sum_{j=13}^{24}a_{s,j}$.

{\rm iv)} $\Sigma_4(a_{s,31})^{\Sigma_4} =  \langle [p_{4,s,3}]\rangle$, with $p_{4,s,3}= \sum_{j=31}^{34}a_{s,j}$.

{\rm v)} $\Sigma_4(a_{s,25},a_{s,35},a_{s,43})^{\Sigma_4} = \langle [p_{4,s,4}]\rangle$, with 
$$p_{4,s,4}= \sum_{j=25}^{30}a_{s,j} +  \sum_{j=39}^{43}a_{s,j} + a_{s,45}.$$
\end{lems}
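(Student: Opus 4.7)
\textbf{Proof plan for Lemma \ref{bd2m3}.} The proof splits naturally into two parts: the direct-summand decomposition (i), and the one-dimensional invariant computations (ii)--(v). My first step would be to partition the $45$ admissible basis monomials $a_{s,j}$ according to the multiset of their exponent sequences. Inspection reveals exactly four $\Sigma_4$-stable blocks: indices $j=1,\ldots,12$ share exponent multiset $\{0,2^{s-1}-1,2^{s-1}-1,2^s-1\}$; indices $j=13,\ldots,24$ share $\{1,2^{s-1}-2,2^{s-1}-1,2^s-1\}$; indices $j=31,\ldots,34$ share $\{3,2^{s-1}-3,2^{s-1}-2,2^s-1\}$; and the remaining $17$ basis monomials (with $j\in\{25,\ldots,30\}\cup\{35,\ldots,45\}$) are distinguished by containing the exponent $2^s-2$ rather than $2^s-1$. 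Since these distinguishing multisets and the ``contains $2^s-2$'' condition are all permutation-invariant, each of the four blocks is closed under the $\Sigma_4$-action, yielding the direct summand decomposition (i).

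For blocks 1, 2 and 3 the $\Sigma_4$-action is literally the permutation action on the basis: variable permutations carry each basis monomial to another basis monomial in the same block, with no hit reduction required. Hence each of these three blocks is a genuine permutation module, and its invariant subspace is one-dimensional, spanned by the total orbit sum---namely $p_{4,s,1}$, $p_{4,s,2}$, $p_{4,s,3}$ respectively---proving (ii), (iii) and (iv) simultaneously.

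The main obstacle is (v). In contrast to the first three blocks, the $\Sigma_4$-action on block 4 is not a pure permutation action: for several indices $j$ the monomial $\rho_i(a_{s,j})$ is inadmissible and reduces modulo $\mathcal A^+P_4$ to a nontrivial $\mathbb F_2$-linear combination of other basis classes $[a_{s,k}]$ inside block 4. My plan is to tabulate each class $[\rho_i(a_{s,j})]$ in the basis $\{[a_{s,k}]\}_{k\in\text{block 4}}$ for $i=1,2,3$ using the standard Kameko/Cartan-type hit reductions---the same systematic procedure employed in the hit-basis computation of $(QP_4)_n$ in \cite{su5}---assemble the resulting three $17\times17$ matrices over $\mathbb F_2$, and solve the joint system $\rho_i f\equiv f$. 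The predicted one-dimensional kernel spanned by $p_{4,s,4}$ already encodes the essential asymmetry that the classes $[a_{s,35}]$, $[a_{s,36}]$, $[a_{s,37}]$, $[a_{s,38}]$ and $[a_{s,44}]$ must appear with coefficient zero in any invariant: these five classes occur in hit-reduction identities that couple them with basis elements of a different orbit-type (notably among indices $39,\ldots,43$) in a way that forces any $\Sigma_4$-invariant to annihilate their coefficients. Once the reduction tables are in hand the kernel calculation is routine linear algebra, so the technical crux of the lemma is the patient hit-reduction bookkeeping on the seventeen monomials of block 4.
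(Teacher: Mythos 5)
Your plan for parts (i) and (v) is essentially the paper's: identify the four blocks, write a general element of the block as $f \equiv \sum_j \gamma_j a_{s,j}$, compute $\rho_i(f)+f$ for $i=1,2,3$ via hit reductions, and solve the resulting linear system over $\mathbb F_2$; your prediction that $\gamma_{35}=\gamma_{36}=\gamma_{37}=\gamma_{38}=\gamma_{44}=0$ while the remaining coefficients coincide is exactly what the paper's three displayed relations force. However, there is a genuine gap in your treatment of parts (iii) and (iv). You assert that on blocks 2 and 3 the $\Sigma_4$-action is ``literally the permutation action on the basis\dots with no hit reduction required,'' and you deduce (iii) and (iv) from the orbit-sum description of invariants of a permutation module. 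This is false. The exponent multiset of block 2 is $\{1,\,2^{s-1}-2,\,2^{s-1}-1,\,2^s-1\}$, four pairwise distinct exponents for $s\geqslant 5$, hence $24$ arrangements, of which only $12$ occur among $a_{s,13},\dots,a_{s,24}$; for instance $\rho_1(a_{s,13}) = x_1^{2^{s-1}-2}x_2x_3^{2^{s-1}-1}x_4^{2^s-1}$ is not a basis monomial and must be rewritten modulo $\mathcal A^+P_4$. Block 3 is worse: $24$ arrangements of $\{3,\,2^{s-1}-3,\,2^{s-1}-2,\,2^s-1\}$ against only $4$ basis monomials $a_{s,31},\dots,a_{s,34}$. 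So these two blocks are not permutation modules on the listed bases, and (iii) and (iv) require the same reduction-and-solve computation you reserve for block 4 (the paper indeed handles all of (ii)--(v) by computations ``similar'' to the one it writes out for (v)). Only block 1 is a genuine transitive permutation module, since all $4!/2!=12$ arrangements of $\{0,\,2^{s-1}-1,\,2^{s-1}-1,\,2^s-1\}$ appear among $a_{s,1},\dots,a_{s,12}$, so (ii) alone follows from your orbit-sum argument.

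Two smaller points. First, your characterization of block 4 as the monomials ``containing the exponent $2^s-2$'' fails for $a_{s,39},\dots,a_{s,44}$, whose largest exponents are $2^s-3$ or $2^s-5$; the feature that actually separates block 4 from the other three blocks is the absence of the exponent $2^s-1$. Second, permutation-invariance of the exponent multiset does not by itself prove that the four blocks are $\Sigma_4$-submodules of $(QP_4)_{2^{s+1}-3}$: hit reduction does not preserve exponent multisets, so closure of each block under the reduced action of $\rho_1,\rho_2,\rho_3$ still has to be verified, which is the ``simple computation'' the paper invokes for part (i). Since you intend to tabulate those reductions anyway, this costs nothing extra, but it is a verification, not a consequence of the multiset bookkeeping.
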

\begin{proof} We obtain Part i) by a simple computation using Proposition \ref{mdd41}. We prove Part v) in detail. The others can be proved by the similar computations. By a simple computation we see that the set $\{[a_{s,j}]: j = 25,\ldots,30, 35 \ldots, 45\}$ is a basis for $\Sigma_4(a_{s,25},a_{s,35},a_{s,43})$. Suppose $[f] \in \Sigma_4(a_{s,25},a_{s,35},a_{s,43})^{\Sigma_4}$, then 
$$ f \equiv  \sum_{j=25}^{30}\gamma_ja_{s,j} +  \sum_{j=35}^{45}\gamma_ja_{s,j}$$
with $\gamma_j \in \mathbb F_2$. By a direct computation, we get
\begin{align*}
\rho_1(f) + f &\equiv (\gamma_{25} + \gamma_{28})(a_{s,25} + a_{s,28})+  (\gamma_{26} + \gamma_{29})(a_{s,26} + a_{s,29})\\
&\quad +  (\gamma_{27} + \gamma_{41})a_{s,35} +  (\gamma_{27} + \gamma_{40})a_{s,36}+  (\gamma_{37} + \gamma_{38})(a_{s,37} + a_{s,38})\\
&\quad  +  (\gamma_{39} + \gamma_{42})(a_{s,39}+ a_{s,42}) +  (\gamma_{41} + \gamma_{43})a_{s,44} +  (\gamma_{40} + \gamma_{43})a_{s,45} \equiv 0,\\
\rho_2(f) + f &\equiv (\gamma_{26} + \gamma_{27})(a_{s,26} +  a_{s,27}) +   (\gamma_{28} + \gamma_{30})(a_{s,28} + a_{s,30})\\
&\quad  +  (\gamma_{35} + \gamma_{37})(a_{s,35}+  a_{s,37})  +  (\gamma_{29} + \gamma_{36} + \gamma_{40})(a_{s,36}+  a_{s,40})\\
&\quad +  (\gamma_{39} + \gamma_{41})(a_{s,39} + a_{s,41}) +  (\gamma_{42} + \gamma_{43} + \gamma_{44})(a_{s,43} + a_{s,44}) \\
&\quad  +  (\gamma_{29} + \gamma_{42})(a_{s,38} + a_{s,45}) \equiv 0,\\
\rho_3(f) + f &\equiv (\gamma_{25} + \gamma_{26})(a_{s,25} +  a_{s,26}) +   (\gamma_{28} + \gamma_{29})(a_{s,28} + a_{s,29})\\
&\quad +  (\gamma_{35} + \gamma_{36})(a_{s,35} +  a_{s,36}) +  (\gamma_{30} + \gamma_{37} + \gamma_{39})(a_{s,37}  + a_{s,39})\\
&\quad  +  (\gamma_{30} + \gamma_{38} + \gamma_{42})(a_{s,38} + a_{s,42}) +  (\gamma_{40} + \gamma_{41})(a_{s,40} + a_{s,41})\\
&\quad +  (\gamma_{30} + \gamma_{44} + \gamma_{45})(a_{s,44} +  a_{s,45}) \equiv 0.
\end{align*}
The above equalities imply $\gamma_j =0$ for $j = 35, 36, 37, 38, 44$ and $\gamma_j = \gamma_{25}$ for $j \ne 35, 36, 37, 38, 44$. The lemma is proved.
\end{proof}
\begin{proof}[Proof of Proposition \ref{mdd411}] Let $f \in P_4$ such that $[f] \in (QP_4)_{2^{s+1}-3}^{GL_4}$. Since $\Sigma_4 \subset GL_4$, we have $[f] \in (QP_4)_{2^{s+1}-3}^{\Sigma_4}$. Then, $f \equiv \sum_{j=1}^4\gamma_jp_{4,s,j}$ with $\gamma_j \in \mathbb F_2$. By a direct computation, we get
\begin{align*}
\rho_4(f) + f &\equiv  (\gamma_{1} + \gamma_{4})a_{s,3} + \gamma_{1}a_{s,9} + (\gamma_{2} + \gamma_{3})a_{s,15}\\
&\quad + \gamma_{2}a_{s,21} + \mbox{ other terms } \equiv 0.
\end{align*}
The last equality implies $\gamma_j = 0$ for $j = 1,2,3,4$. The proposition follows.
\end{proof}
From Theorem \ref{md51}, we see that $\text{Ext}_{\mathcal A}^{4, 2^{s+1}+1}(\mathbb F_2, \mathbb F_2) = 0$ for any $s \ne 5$ and $\text{Ext}_{\mathcal A}^{4, 65}(\mathbb F_2, \mathbb F_2) = \langle D_3(0)\rangle$. Hence, Theorem \ref{dlm} holds for $n = 2^{s+1}-3$.

\subsection{The case $n = 2^{s+1}-2$}\

\medskip
Since Kameko's homomorphism in the degree $2^{s+1}-2$,
$$(\widetilde{Sq}^0_*)_{2^{s+1}-2} : (QP_4)_{2^{s+1}-2} \to (QP_4)_{2^{s} - 3}$$ 
is an epimorphism of $GL_4$-modules, using Proposition \ref{mdd411}, we have
$$(QP_4)_{2^{s+1}-2}^{GL_4} \subset \big(\text{Ker}(\widetilde{Sq}^0_* )_{2^{s+1}-2}\big)^{GL_4}.$$

From \cite{su1,su5}, we have the following.
\begin{props}[see \cite{su1,su5}]\label{mdd42} Let $s$ be a positive integer. Then,
$$\dim\big(\text{\rm Ker}(\widetilde{Sq}^0_* )_{2^{s+1}-2}\big) = \begin{cases} 6, &\text{if } s =1,\\ 20, &\text{if } s =2,\\ 35 &\text{if } s \geqslant 3.\end{cases}
$$
\end{props}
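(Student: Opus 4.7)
The plan is to compute $\dim \text{Ker}(\widetilde{Sq}^0_*)_{2^{s+1}-2}$ by exploiting the short exact sequence
$$0 \longrightarrow \text{Ker}(\widetilde{Sq}^0_*)_{2^{s+1}-2} \longrightarrow (QP_4)_{2^{s+1}-2} \stackrel{(\widetilde{Sq}^0_*)_{2^{s+1}-2}}{\longrightarrow} (QP_4)_{2^s-3}.$$
The first step is to observe that Kameko's map is surjective onto $(QP_4)_{2^s-3}$ for $s \geqslant 2$ (and the target is zero for $s=1$, since it sits in degree $-1$). Indeed, for any monomial $z$ of degree $2^s-3$ the element $x_1x_2x_3x_4\, z^2$ has degree $2^{s+1}-2$ and $\psi(x_1x_2x_3x_4\, z^2) = z$, so the image exhausts $(QP_4)_{2^s-3}$. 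Consequently
$$\dim \text{Ker}(\widetilde{Sq}^0_*)_{2^{s+1}-2} = \dim (QP_4)_{2^{s+1}-2} - \dim (QP_4)_{2^s-3},$$
and the target dimension is already supplied by Proposition \ref{mdd41}.

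With that reduction, the task becomes computing $\dim (QP_4)_{2^{s+1}-2}$. I would follow the standard approach used throughout the paper: enumerate monomials of degree $2^{s+1}-2$ in $P_4$, organise them according to their weight vector (equivalently, the binary digits of the exponents), and within each weight class apply the Cartan formula together with the standard Steenrod-square identities to exhibit explicit relations $f \equiv g$ modulo $\mathcal A^+ P_4$. The admissible monomials that survive these reductions form the candidate basis. For the asserted kernel dimensions to hold, the total $\dim (QP_4)_{2^{s+1}-2}$ should turn out to be $6$, $24$, $50$, $70$, $80$ for $s = 1, 2, 3, 4$ and $s\geqslant 5$ respectively, which when combined with Proposition \ref{mdd41} yield the claimed $6, 20, 35, 35, 35$.

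For the small cases $s = 1$ (degree $2$) and $s = 2$ (degree $6$) direct enumeration suffices: only $x_i^2$-type hits occur in the first case, and the monomial count remains small in the second. For $s = 3$ (degree $14$) the bookkeeping is heavier but still entirely finite. For $s \geqslant 4$ one expects a stabilization phenomenon: the basis should be parametrised by uniform formulas in $s$, exactly analogous to the listing preceding Proposition \ref{mdd411}, and once the exponents are sufficiently separated no further collisions or new hit relations arise, so the dimension locks at $80$.

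The main obstacle, as in every instance of the hit problem, is establishing linear independence of the candidate basis. The natural tool is the pairing between $QP_4$ and the primitives $P((P_4)^*)$ in the divided power algebra $\Gamma(a_1,a_2,a_3,a_4)$: for each candidate basis class $b_i$ one produces an explicit primitive $q_i$ with $\langle b_i, q_j \rangle = \delta_{ij}$. Because $\mathcal A^+ P_4$ annihilates primitives, such a dual family certifies both nonvanishing in $QP_4$ and linear independence simultaneously. The combinatorial bookkeeping of weight vectors required for $s \geqslant 3$ is similar in flavour to the case analyses in Subsections \ref{bab3} and \ref{bab4}, but considerably heavier since one must track four exponents instead of three and verify the eventual stabilization to the common value $80$.
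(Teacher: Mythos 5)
Your reduction is sound: Kameko's homomorphism $(\widetilde{Sq}^0_*)_{2^{s+1}-2}\colon (QP_4)_{2^{s+1}-2}\to (QP_4)_{2^s-3}$ is indeed surjective (the preimage $x_1x_2x_3x_4\,z^2$ of a monomial $z$ works, since $\psi$ descends to $QP_4$ and monomial classes span the target), so $\dim\mathrm{Ker}(\widetilde{Sq}^0_*)_{2^{s+1}-2} = \dim (QP_4)_{2^{s+1}-2} - \dim (QP_4)_{2^s-3}$, and the totals $6,24,50,70,80$ you list are exactly the ones forced by Proposition \ref{mdd41} together with the claimed kernel dimensions; they are also consistent with the explicit basis $\{b_{s,j}\}$ the paper lists for the kernel.

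However, the proposal stops short of a proof. The entire mathematical content of the proposition is the determination of $\dim(QP_4)_{2^{s+1}-2}$ (equivalently, of the kernel itself), and you do not carry out that computation: you state what the dimensions ``should turn out to be'' in order for the asserted kernel dimensions to hold --- which is circular as an argument --- and then describe the generic hit-problem methodology (weight vectors, Cartan formula, a dual family of primitives in $\Gamma(a_1,\dots,a_4)$) without producing the admissible monomials, the hit relations, or the dual elements in any of the degrees $2,6,14,30,\dots$, nor the stabilization argument for $s\geqslant 5$. Note also that the paper does not prove this proposition here either; it quotes it from \cite{su5}, where the computation is organized the other way around: one exhibits the explicit monomial basis $b_{s,1},\dots,b_{s,35}$ of the kernel directly (the list reproduced after the proposition), rather than computing the full space and subtracting. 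Your route is equivalent given surjectivity, but as written it replaces the cited fact with another uncomputed fact of the same order of difficulty, so the gap is essentially the whole computation.
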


A basis for $\big(\text{\rm Ker}(\widetilde{Sq}^0_* )_{2^{s+1}-2}\big)$ is the set consisting of all the classes represented monomials $b_j= b_{s,j}$ which are determined as follows:

\medskip
For $s \geqslant 1$,

\medskip
 \centerline{\begin{tabular}{lll}
$b_{s,1} = x_3^{2^s-1}x_4^{2^s-1}$ &  $b_{s,2} = x_2^{2^s-1}x_4^{2^s-1}$ &  $b_{s,3} = x_2^{2^s-1}x_3^{2^s-1}$\cr  
$b_{s,4} = x_1^{2^s-1}x_4^{2^s-1}$ &  $b_{s,5} = x_1^{2^s-1}x_3^{2^s-1}$ &  $b_{s,6} = x_1^{2^s-1}x_2^{2^s-1}$\cr  
\end{tabular}}

\medskip
For $s \geqslant 2$,

\medskip
 \centerline{\begin{tabular}{lll}
$b_{s,7} = x_2x_3^{2^s-2}x_4^{2^s-1}$ &  $b_{s,8} = x_2x_3^{2^s-1}x_4^{2^s-2}$ &  $b_{s,9} = x_2^{2^s-1}x_3x_4^{2^s-2}$\cr  
$b_{s,10} = x_1x_3^{2^s-2}x_4^{2^s-1}$ &  $b_{s,11} = x_1x_3^{2^s-1}x_4^{2^s-2}$ &  $b_{s,12} = x_1x_2^{2^s-2}x_4^{2^s-1}$\cr  
$b_{s,13} = x_1x_2^{2^s-2}x_3^{2^s-1}$ &  $b_{s,14} = x_1x_2^{2^s-1}x_4^{2^s-2}$ &  $b_{s,15} = x_1x_2^{2^s-1}x_3^{2^s-2}$\cr  
$b_{s,16} = x_1^{2^s-1}x_3x_4^{2^s-2}$ &  $b_{s,17} = x_1^{2^s-1}x_2x_4^{2^s-2}$ &  $b_{s,18} = x_1^{2^s-1}x_2x_3^{2^s-2}$\cr  
\end{tabular}}

\medskip
For $s = 2$, $b_{2,19} = x_1x_2x_3^{2}x_4^{2}$,\  $b_{2,20} = x_1x_2^{2}x_3x_4^{2}$.

\medskip
For $s \geqslant 3$,

\medskip
 \centerline{\begin{tabular}{lll}
$b_{s,19} = x_2^{3}x_3^{2^s-3}x_4^{2^s-2}$ &  $b_{s,20} = x_1^{3}x_3^{2^s-3}x_4^{2^s-2}$ &  $b_{s,21} = x_1^{3}x_2^{2^s-3}x_4^{2^s-2}$\cr  
$b_{s,22} = x_1^{3}x_2^{2^s-3}x_3^{2^s-2}$ &  $b_{s,23} = x_1x_2^{2}x_3^{2^s-4}x_4^{2^s-1}$ &  $b_{s,24} = x_1x_2^{2}x_3^{2^s-1}x_4^{2^s-4}$\cr  
$b_{s,25} = x_1x_2^{2^s-1}x_3^{2}x_4^{2^s-4}$ &  $b_{s,26} = x_1^{2^s-1}x_2x_3^{2}x_4^{2^s-4}$ &  $b_{s,27} = x_1x_2x_3^{2^s-2}x_4^{2^s-2}$\cr  
$b_{s,28} = x_1x_2^{2^s-2}x_3x_4^{2^s-2}$ &  $b_{s,29} = x_1^{3}x_2^{5}x_3^{2^s-6}x_4^{2^s-4}$ &  $b_{s,30} = x_1x_2^{2}x_3^{2^s-3}x_4^{2^s-2}$\cr  
$b_{s,31} = x_1x_2^{3}x_3^{2^s-4}x_4^{2^s-2}$ &  $b_{s,32} = x_1x_2^{3}x_3^{2^s-2}x_4^{2^s-4}$ &  $b_{s,33} = x_1^{3}x_2x_3^{2^s-4}x_4^{2^s-2}$\cr  
$b_{s,34} = x_1^{3}x_2x_3^{2^s-2}x_4^{2^s-4}$ &   &\cr
\end{tabular}}

\medskip
For $s = 3$, $b_{3,35} = x_1^3x_2^3x_3^{4}x_4^{4}$, and 
for $s \geqslant 4$, $b_{s,35} = x_1^{3}x_2^{2^s-3}x_3^{2}x_4^{2^s-4}$.

We set 
$$p_{4,s} = \begin{cases} x_1x_2x_3^6x_4^6 + x_1^3x_2^3x_3^4x_4^4, &\text{if } s =3,\\  \sum_{j=1}^{35}b_{s,j} &\text{if } s \geqslant 4.\end{cases}$$
By a direct computation using Proposition \ref{mdd42}, one gets the following.
\begin{props}\label{mdd412} Let $s$ be a positive integer. Then,
$$\big(\text{\rm Ker}(\widetilde{Sq}^0_* )_{2^{s+1}-2}\big)^{GL_4} = \begin{cases} 0, &\text{if } s \leqslant 2,\\ \langle [p_{4,s}]\rangle &\text{if } s \geqslant 3.\end{cases}
$$
\end{props}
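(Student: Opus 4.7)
The plan is to imitate the two-stage strategy used for Proposition \ref{mdd411}: first describe $(\text{\rm Ker}(\widetilde{Sq}^0_*)_{2^{s+1}-2})^{\Sigma_4}$ using the involutions $\rho_1,\rho_2,\rho_3$, and then cut down further by imposing $\rho_4$-invariance. Proposition \ref{mdd42} supplies an explicit monomial basis $\{[b_{s,j}]\}$ of size $6$, $20$ or $35$ according as $s=1$, $s=2$ or $s\geqslant 3$, so the whole problem is finite-dimensional linear algebra over $\mathbb F_2$, driven entirely by the hit relations in degree $2^{s+1}-2$.

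For $s\leqslant 2$ I would simply write a candidate invariant $f\equiv \sum_j\gamma_j b_{s,j}$, expand each $\rho_i(b_{s,j})$ and reduce modulo $\mathcal A^+P_4$ using the small admissible basis in these low degrees, and read off from the resulting homogeneous system $\rho_i(f)+f\equiv 0$ (for $i=1,2,3,4$) that every $\gamma_j$ must vanish. For $s\geqslant 3$ I would first partition the basis into $\Sigma_4$-orbits, obtaining a decomposition
$$\text{\rm Ker}(\widetilde{Sq}^0_*)_{2^{s+1}-2}=\bigoplus_\alpha \Sigma_4(b_{s,j_\alpha})$$
analogous to Lemma \ref{bd2m3}(i). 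On each summand the three relations $\rho_i(f)\equiv f$ for $i=1,2,3$ form a small linear system whose solutions are a handful of orbit sums $p_{4,s,\alpha}$; their span is $(\text{\rm Ker}(\widetilde{Sq}^0_*)_{2^{s+1}-2})^{\Sigma_4}$.

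The final step is to apply $\rho_4$ to a general $\Sigma_4$-invariant $f\equiv \sum_\alpha \gamma_\alpha p_{4,s,\alpha}$ and match coefficients of admissible monomials in $\rho_4(f)+f\equiv 0$. The resulting linear system should force all $\gamma_\alpha$ to be equal, so the space of $GL_4$-invariants is at most one-dimensional. I would then verify by a direct check that the specific combination $p_{4,s}$ stated in the proposition satisfies $\rho_i(p_{4,s})\equiv p_{4,s}$ for all $i=1,\dots,4$, which both establishes non-triviality and pins down the generator. The anomalous shape of $p_{4,3}$ reflects the fact that in degree $14$ several of the monomials $b_{3,j}$ collapse to one another modulo hits, so many of the $b_{3,j}$'s that appear in the sum for $s\geqslant 4$ are already redundant when $s=3$.

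The main obstacle, as in Lemma \ref{bd2m3}, is the bookkeeping of hit relations. Unlike in Proposition \ref{mdd411}, the ambient space $(QP_4)_{2^{s+1}-2}$ is strictly larger than $\text{\rm Ker}(\widetilde{Sq}^0_*)_{2^{s+1}-2}$, so when reducing $\rho_i(b_{s,j})$ modulo $\mathcal A^+P_4$ one must also confirm that the reduced form lies in the kernel and re-express it in terms of the basis $\{[b_{s,j}]\}$. This forces heavy reliance on the explicit admissible bases and hit relations in $P_4$ extracted from \cite{su5}, and although the answer stabilises for $s\geqslant 4$, the intermediate computations grow substantially as $s$ increases and must be organised carefully to keep the verification tractable.
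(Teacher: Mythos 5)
Your plan coincides with the paper's own proof: it uses the basis of $\text{Ker}(\widetilde{Sq}^0_*)_{2^{s+1}-2}$ from Proposition \ref{mdd42}, splits it into $\Sigma_4$-submodules generated by the $b_{s,j}$ (this is Lemma \ref{bd2m2}), computes the $\Sigma_4$-invariants of each summand from the relations $\rho_i(f)+f\equiv 0$ for $i=1,2,3$, and then imposes $\rho_4$-invariance to reduce to $\langle[p_{4,s}]\rangle$ (resp.\ $0$ for $s\leqslant 2$). One small correction to your side remark: for $s=3$ the monomials $b_{3,j}$ do not collapse modulo hits (the dimension is still $35$); the anomaly is rather that the last $\Sigma_4$-summand has a one-dimensional rather than two-dimensional invariant space, spanned by $b_{3,27}+b_{3,35}$, and the $\rho_4$-conditions then annihilate the remaining orbit sums instead of forcing all coefficients equal, so the $s=3$ case requires its own computation rather than the generic pattern you describe.
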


For simplicity, we will prove this proposition in detail for $s \geqslant 4$. The others can be proved by the similar computations.
We have the following.
\begin{lems}\label{bd2m2} {\rm i)} For any $s\geqslant 4$, there is a direct summand decomposition of the $\Sigma_4$-modules:
$$\big(\text{\rm Ker}(\widetilde{Sq}^0_* )_{2^{s+1}-2} = \Sigma_4(b_{s,1})\oplus \Sigma_4(b_{s,7}) \oplus \Sigma_4(b_{s,19}) \oplus \Sigma_4(b_{s,23}) \oplus \Sigma_4(b_{s,29},b_{s,30}).$$

{\rm ii)} $\Sigma_4(b_{s,1})^{\Sigma_4} = \langle [\bar p_{4,s,1}]\rangle$, with $\bar p_{4,s,1}= \sum_{j=1}^{6}b_{s,j}$.

{\rm iii)} $\Sigma_4(b_{s,7})^{\Sigma_4} = \langle [\bar p_{4,s,2}]\rangle$, with $\bar p_{4,s,2}= \sum_{j=7}^{18}b_{s,j}$.

{\rm iv)} $\Sigma_4(b_{s,19})^{\Sigma_4} =  \langle [\bar p_{4,s,3}]\rangle$, with $\bar p_{4,s,3}= \sum_{j=19}^{22}b_{s,j}$.

{\rm v)} $\Sigma_4(b_{s,23})^{\Sigma_4} =  \langle [\bar p_{4,s,4}]\rangle$, with $\bar p_{4,s,4}= \sum_{j=23}^{26}b_{s,j}$.

{\rm vi)} $\Sigma_4(b_{s,29},a_{s,30})^{\Sigma_4} = \langle [\bar p_{4,s,5}],[\bar p_{4,s,6}]\rangle$, where $$\bar p_{4,s,5}= \sum_{j=27}^{29}b_{s,j},\  \bar p_{4,s,6}=  \sum_{j=30}^{35}b_{s,j}.$$
\end{lems}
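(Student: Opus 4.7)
The plan is to model the proof on that of Lemma \ref{bd2m3}, exploiting Proposition \ref{mdd42} to get an explicit basis for $\text{Ker}(\widetilde{Sq}^0_*)_{2^{s+1}-2}$ and then computing each invariant subspace by imposing the relations $\rho_i(f) \equiv f$ for $i = 1, 2, 3$. Throughout I restrict to $s \geqslant 4$ since the small cases $s = 1, 2, 3$ can be handled by direct analogous (but shorter) computations.

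First, for part (i), I would inspect the 35 basis monomials $b_{s,j}$ of Proposition \ref{mdd42} and partition them into $\Sigma_4$-orbits according to their exponent profile (up to permutation of $x_1,\dots,x_4$). The generators $b_{s,1}$, $b_{s,7}$, $b_{s,19}$, $b_{s,23}$ correspond to the orbits of exponent types $(0,0,2^s-1,2^s-1)$, $(0,1,2^s-2,2^s-1)$, $(0,3,2^s-3,2^s-2)$, $(1,2,2^s-4,2^s-1)$ respectively, giving the index sets $\{1,\dots,6\}$, $\{7,\dots,18\}$, $\{19,\dots,22\}$, $\{23,\dots,26\}$. The remaining classes $\{27,\dots,35\}$ form a single $\Sigma_4$-orbit (actually two orbits whose admissible representatives get entangled by hit relations), generated by $b_{s,29}$ and $b_{s,30}$. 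Since each $\rho_i$ permutes the monomials inside each orbit modulo hit elements, the five subspaces are $\Sigma_4$-stable and intersect trivially, so their sum is direct, accounting for all 35 classes.

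For parts (ii)–(v), each summand has as basis exactly the classes of the monomials in the corresponding orbit, so a general element is $f \equiv \sum \gamma_j b_{s,j}$ with $\gamma_j \in \mathbb F_2$. Applying each transposition $\rho_i$ and re-expressing $\rho_i(f)+f$ in the admissible basis (using the standard Steenrod-square straightenings valid in these degrees), one obtains linear equations which force all $\gamma_j$ on a given orbit to be equal. The unique nonzero invariant in each case is then the sum of all admissibles of that orbit, namely $\bar p_{4,s,1}$, $\bar p_{4,s,2}$, $\bar p_{4,s,3}$, $\bar p_{4,s,4}$.

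Part (vi) is the main technical step, and where the hard work lies. The summand $\Sigma_4(b_{s,29}, b_{s,30})$ has the nine basis classes $\{b_{s,j} : 27 \leqslant j \leqslant 35\}$, so a general invariant is $f \equiv \sum_{j=27}^{35} \gamma_j b_{s,j}$ with nine unknowns. I would then compute $\rho_1(f)+f$, $\rho_2(f)+f$, $\rho_3(f)+f$, each of which must be hit. The obstacle is that several of the monomials $\rho_i(b_{s,j})$ are not themselves admissible: they must be straightened using Cartan-formula hit relations involving $Sq^{2^k}$ on suitable products, and this straightening is $s$-dependent, valid precisely in the range $s \geqslant 4$ because several exponents (e.g.\ $2^s-6$, $2^s-5$) must be positive. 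After carefully carrying out these straightenings and comparing coefficients of admissible monomials, the resulting linear system will have rank 7, with a 2-dimensional solution space. Choosing representatives, I expect the invariants to be precisely $[\bar p_{4,s,5}]$ and $[\bar p_{4,s,6}]$; verifying that these two candidates are indeed $\Sigma_4$-fixed (by recomputing $\rho_i$ on each) gives the reverse inclusion and completes the lemma.
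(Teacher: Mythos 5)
Your proposal follows essentially the same route as the paper: obtain part (i) by sorting the 35 basis classes of Proposition \ref{mdd42} into $\Sigma_4$-stable blocks, then compute each invariant subspace by writing a general element $f\equiv\sum\gamma_jb_{s,j}$ and imposing $\rho_i(f)\equiv f$ for $i=1,2,3$, with the nine-dimensional summand $\Sigma_4(b_{s,29},b_{s,30})$ treated as the main case and yielding a rank-$7$ system whose two-dimensional solution space is spanned by $[\bar p_{4,s,5}]$ and $[\bar p_{4,s,6}]$. This matches the paper's argument, which likewise reduces everything to these direct computations and records only the resulting congruences for $\rho_i(f)+f$.
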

\begin{proof} From Proposition \ref{mdd42} we easily obtain Part i). Now, we prove Part vi) in detail. The others can be proved by the similar computations. By a direct computation we see that the set $\{[b_{s,j}]: j = 27 \leqslant j \leqslant 35\}$ is a basis for $\Sigma_4(b_{s,29},b_{s,30})$. Suppose $[f] \in \Sigma_4(b_{s,29},b_{s,30})^{\Sigma_4}$, then 
$ f \equiv  \sum_{j=27}^{35}\gamma_jb_{s,j}$ with $\gamma_j \in \mathbb F_2$. By a direct computation, we obtain
\begin{align*}
\rho_1(f) + f &\equiv (\gamma_{28} + \gamma_{29} + \gamma_{30} + \gamma_{35})b_{s,27} +  (\gamma_{31} + \gamma_{33})(b_{s,31}+ b_{s,33})\\
&\quad +   (\gamma_{32} + \gamma_{34})(b_{s,32}  + b_{s,34}) \equiv 0, \\
\rho_2(f) + f &\equiv (\gamma_{27} + \gamma_{28} + \gamma_{32} + \gamma_{33})(b_{s,27} +  b_{s,28})\\
&\quad +   (\gamma_{30} + \gamma_{31})(b_{s,30}  + b_{s,31}) +  (\gamma_{34} + \gamma_{35})(b_{s,34} + b_{s,35}) \equiv 0,\\
\rho_3(f) + f &\equiv (\gamma_{28} + \gamma_{29} + \gamma_{30} + \gamma_{35})b_{s,27} +  (\gamma_{31} + \gamma_{32})(b_{s,31} + b_{s,32})\\
&\quad +  (\gamma_{33} + \gamma_{34})(b_{s,33} + b_{s,34}) \equiv 0.
\end{align*}
The above equalities imply $\gamma_j =\gamma_{27}$ for $j = 27,28,29$ and $\gamma_j = \gamma_{30}$ for $30 \leqslant j \leqslant 35$. The lemma is proved.
\end{proof}
\begin{rems} For $s = 3$, Parts i) to v) of Lemma \ref{bd2m2} hold. We replace Part vi) with
$\Sigma_4(b_{3,29},a_{3,30})^{\Sigma_4} = \langle [p_{4,3}]\rangle.$
\end{rems}
\begin{proof}[Proof of Proposition \ref{mdd412}] Let $f \in P_4$ such that $[f] \in \text{\rm Ker}(\widetilde{Sq}^0_* )_{2^{s+1}-2}^{GL_4}$. Then, $[f] \in \text{\rm Ker}(\widetilde{Sq}^0_* )_{2^{s+1}-2}^{\Sigma_4}$. Hence, $f \equiv \sum_{j=1}^6\gamma_j\bar p_{4,s,j}$ with $\gamma_j \in \mathbb F_2$. By a direct computation, we  have
\begin{align*}
\rho_4(f) + f &\equiv (\gamma_{1} + \gamma_{2})(b_{s,2} +  b_{s,3}) +   (\gamma_{2} + \gamma_{4})(b_{s,7} +  b_{s,8}) +  (\gamma_{2} + \gamma_{5})b_{s,9}\\
&\quad +  (\gamma_{2} + \gamma_{3})(b_{s,14} +  b_{s,15}) +  (\gamma_{3} + \gamma_{6})b_{s,19} +  (\gamma_{4} + \gamma_{6})b_{s,25}\\
&\quad +  (\gamma_{2} + \gamma_{3} + \gamma_{4} + \gamma_{5})b_{s,27} +  (\gamma_{5} + \gamma_{6})(b_{s,31} + b_{s,32}) \equiv 0.
\end{align*}
The last equality implies $\gamma_j = \gamma_1$ for $1 \leqslant j \leqslant 6$. The proposition follows.
\end{proof}
From Theorem \ref{md51}, we have 
$$\text{Ext}_{\mathcal A}^{4, 2^{s+1}+2}(\mathbb F_2, \mathbb F_2) = \begin{cases} 0, &\text{if } s \leqslant 2,\\ 
\langle d_0\rangle &\text{if } s = 3,\\  
\langle h_0^2h_6^2, D_3(1)\rangle &\text{if }  s = 6,\\
\langle h_0^2h_s^2\rangle &\text{if } s \geqslant 4,\, s \ne 6.\\
\end{cases}
$$
Denote by $q_{4,s} \in P((P_4^*)_{2^{s+1}-2})$ the dual of $p_{4,s} \in \big(\text{\rm Ker}(\widetilde{Sq}^0_* )_{2^{s+1}-2}\big)^{GL_4}$. Then, we have
$$\mathbb F_2{\otimes}_{GL_4} P((P_4)^*_{2^{s+1} -2}) = \begin{cases} 0, &\text{if } s \leqslant 2,\\ \langle [q_{4,s}]\rangle, &\text{if } s \geqslant 3.\end{cases}$$

From H\`a \cite{ha} and Singer \cite{si1}, we see that $d_0,\, h_0^2h_s^2 \in \mbox{Im}(Tr_4)$, hence we get
$$Tr_4([q_{4,s}]) = [\phi_4(q_{4,s})] = \begin{cases} d_0, &\text{if } s = 3,\\ h_0^2h_{s}^2, &\text{if } s > 3.\end{cases}$$
Theorem \ref{dlm} holds for $n = 2^{s+1}-2$.

\begin{rems} a) It is well-known that the space $\mbox{Im}(Tr_4)$ had been explicitly determined from the works Bruner, H\`a and H\uhorn ng~\cite{br}, Ch\ohorn n and H\`a~\cite{cha2},  H\`a ~\cite{ha}, H\uhorn ng and Qu\`ynh~ \cite{hq}, Nam~ \cite{na2} and Singer \cite{si1}. Hence, the proof that a certain element is in $\mbox{Im}(Tr_4)$ is unnecessary (see \cite{su5}). To illustrate the fact that $d_0 \in \mbox{Im}(Tr_4)$, we present the computations of H\`a \cite{ha} for this result.
	 
In \cite[Page 102]{ha}, H\`a showed that the element $q_{4,3} \in P((P_4^*)_{14})$ can be determined by
\begin{align*}q_{4,3} &= a_1^{(1)}a_2^{(1)}a_3^{(6)}a_4^{(6)} + a_1^{(1)}a_2^{(2)}a_3^{(5)}a_4^{(6)} + a_1^{(1)}a_2^{(3)}a_3^{(4)}a_4^{(6)} + a_1^{(1)}a_2^{(4)}a_3^{(3)}a_4^{(6)}\\
&\quad + a_1^{(1)}a_2^{(5)}a_3^{(2)}a_4^{(6)} + a_1^{(1)}a_2^{(6)}a_3^{(1)}a_4^{(6)} + a_1^{(2)}a_2^{(1)}a_3^{(6)}a_4^{(5)} + a_1^{(2)}a_2^{(2)}a_3^{(5)}a_4^{(5)}\\
&\quad + a_1^{(2)}a_2^{(3)}a_3^{(4)}a_4^{(5)} + a_1^{(2)}a_2^{(4)}a_3^{(3)}a_4^{(5)} + a_1^{(2)}a_2^{(5)}a_3^{(2)}a_4^{(5)} + a_1^{(2)}a_2^{(6)}a_3^{(1)}a_4^{(5)}\\
&\quad + a_1^{(3)}a_2^{(1)}a_3^{(5)}a_4^{(5)} + a_1^{(3)}a_2^{(2)}a_3^{(6)}a_4^{(3)} + a_1^{(3)}a_2^{(3)}a_3^{(2)}a_4^{(6)} + a_1^{(3)}a_2^{(4)}a_3^{(1)}a_4^{(6)}\\
&\quad + a_1^{(3)}a_2^{(4)}a_3^{(2)}a_4^{(5)} + a_1^{(3)}a_2^{(4)}a_3^{(4)}a_4^{(3)} + a_1^{(3)}a_2^{(6)}a_3^{(2)}a_4^{(3)} + a_1^{(4)}a_2^{(1)}a_3^{(6)}a_4^{(3)}\\
&\quad + a_1^{(4)}a_2^{(2)}a_3^{(5)}a_4^{(3)} + a_1^{(4)}a_2^{(3)}a_3^{(4)}a_4^{(3)} + a_1^{(4)}a_2^{(4)}a_3^{(3)}a_4^{(3)} + a_1^{(4)}a_2^{(5)}a_3^{(2)}a_4^{(3)}\\
&\quad + a_1^{(4)}a_2^{(6)}a_3^{(1)}a_4^{(3)} + a_1^{(5)}a_2^{(1)}a_3^{(3)}a_4^{(5)} + a_1^{(5)}a_2^{(2)}a_3^{(1)}a_4^{(6)} + a_1^{(5)}a_2^{(2)}a_3^{(2)}a_4^{(5)}\\
&\quad + a_1^{(5)}a_2^{(2)}a_3^{(4)}a_4^{(3)} + a_1^{(5)}a_2^{(3)}a_3^{(1)}a_4^{(5)} + a_1^{(5)}a_2^{(3)}a_3^{(3)}a_4^{(3)} + a_1^{(5)}a_2^{(5)}a_3^{(1)}a_4^{(3)}\\
&\quad + a_1^{(6)}a_2^{(1)}a_3^{(1)}a_4^{(6)} + a_1^{(6)}a_2^{(1)}a_3^{(2)}a_4^{(5)} + a_1^{(6)}a_2^{(1)}a_3^{(4)}a_4^{(3)} + a_1^{(6)}a_2^{(2)}a_3^{(3)}a_4^{(3)}, 
\end{align*}
By a direct computation we can easily verify that $Sq^1(q_{4,3}) = 0$, $Sq^2(q_{4,3}) = 0$, $Sq^4(q_{4,3}) = 0$. Hence, $q_{4,3} \in P((P_4^*)_{14})$.

Ch\ohorn n showed in his PhD thesis that
$\phi_4(q_{4,3}) = \bar d_0 + \delta(\lambda_3^2\lambda_9 + \lambda_3\lambda_9\lambda_3)$. Hence, one gets $Tr_4([q_{4,3}]) = [\phi_4(q_{4,3})] = [\bar d_0] = d_0$.

So, it is possible the algorithm in \cite{p725} is flawed.

b) In \cite{hq}, H\uhorn ng and Qu\`ynh stated that $p_0 \in \mbox{Im}(Tr_4)$ but did not provide the detailed proof. However, this result is explicitly proved in Ch\ohorn n and H\`a~\cite{cha2}. Hence, the computations in \cite{p725} for $p_0$ may be new but they are unnecessary for studying Singer's conjecture.

c) In \cite{suw}, we have given a negative answer for Singer's conjecture for the algebraic transfer. Hence, Singer's algebraic transfer cannot be a tool for studying the cohomology of Steenrod algebra. Therefore, the study of Singer's algebraic transfer is no longer necessary.
\end{rems}
\subsection{The case $n = 2^{s+1}-1$}\

\medskip
First, we recall the following.
\begin{props}[see \cite{su1,su5}]\label{mdd43}Let $n = 2^{s+1}-1$ with $s$ a positive integer. Then, the dimension of the $\mathbb F_2$-vector space $(QP_4)_{n}$ is determined by the following table:

\medskip
\centerline{\begin{tabular}{c|ccccc}
$n = 2^s-1$&$s=1$ & $s=2$ & $s=3$&$s=4$  & $s\geqslant 5$\cr
\hline
\ $\dim(QP_4)_n$ & $14$ & $35$ & $75$ &$89$& $85$ \cr
\end{tabular}}
\end{props}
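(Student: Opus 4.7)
My plan is to follow the general strategy for the Peterson hit problem for $P_4$: construct an explicit admissible monomial basis of $(QP_4)_n$ in each degree $n = 2^{s+1}-1$ and count it. Note that $n$ is odd, so the equation $2m+4 = 2^{s+1}-1$ has no integer solution and Kameko's homomorphism (Theorem \ref{dlmd2}) does not reduce to a smaller degree here; hence the computation must be carried out directly.

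First, I would enumerate candidate admissible monomials $x_1^{a_1}x_2^{a_2}x_3^{a_3}x_4^{a_4}$ with $a_1 + a_2 + a_3 + a_4 = 2^{s+1}-1$. Using the classical inadmissibility criterion, a monomial $x^{\alpha}$ is inadmissible if, upon examining the 2-adic expansions of the exponents $a_i$, one can find an Adem-type decomposition $x^{\alpha} = Sq^r(x^{\beta}) + (\text{strictly smaller terms in a chosen order})$. Sweeping out all such $x^{\alpha}$ yields a finite explicit list $\{a_{s,j}\}$ of admissibles for each $s$, organised by the pattern of exponents (pure spikes, one small exponent, two small exponents, and so on).

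Second, I would verify that this list spans $(QP_4)_n$ modulo $\mathcal{A}^+P_4$. This proceeds by repeatedly applying the Cartan expansions of $Sq^{2^i}$ to well-chosen degree $n - 2^i$ polynomials, using identities of the form
$$x_i^{2a+1}x_j^{2b}+x_i^{2a}x_j^{2b+1} \equiv \text{lower-weight terms},$$
to reduce any given monomial to a $\mathbb F_2$-combination of the candidates. The stabilisation at $85$ for $s \geqslant 5$ comes out of this step: once all nonzero exponents are large enough (at least $2^{s-1}-1$ in the relevant blocks), the combinatorial pattern becomes independent of $s$, whereas the exceptional counts $14$, $35$, $75$, $89$ in small $s$ reflect coincidences of exponent blocks that collapse or merge admissibility classes only in those small degrees.

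Third, I would establish linear independence of the listed admissibles by producing, for each $a_{s,j}$, a primitive element $q_{s,j} \in P((P_4)^*_n)$ with $\langle a_{s,j},\, q_{s,k}\rangle = \delta_{jk}$, giving a dual basis and thereby forcing the dimension to be at least the length of the list. The main obstacle will be precisely this third step in the cases $s = 3$ and $s = 4$, where a handful of irregular admissibles (analogous to the exceptional basis elements $a_{3,31},\ldots,a_{3,35}$ and $a_{4,44}, a_{4,45}$ appearing in the neighbouring Proposition \ref{mdd41}) must be matched with nonstandard primitives; verifying that these primitives are annihilated by every $Sq^r$, $r > 0$, and that the resulting pairing matrix is nondegenerate, is the technical crux. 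The spanning step, though lengthy, is mechanical and follows the same template already illustrated in Subsections \ref{bab1}--\ref{bab4} for $P_3$.
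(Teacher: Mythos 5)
Your outline correctly identifies the standard machinery (admissible monomials, spanning via the Cartan formula and Kameko/Singer-type inadmissibility criteria, linear independence via the perfect pairing between $(QP_4)_n$ and $P((P_4)^*_n)$), and your observation that Kameko's homomorphism gives no reduction here because $2^{s+1}-1$ is odd is correct. But the proposal contains no proof of the statement: the statement \emph{is} the five numbers $14, 35, 75, 89, 85$, and none of them is derived. Every step is described in the conditional ("I would enumerate\ldots", "the stabilisation at $85$ comes out of this step") without exhibiting the list of admissible monomials, the hit relations that eliminate the rest, or the primitives that certify independence. For a purely computational proposition there is no residual mathematical content once the method is named; the entire burden is the execution, and that is absent. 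The paper itself does not prove this proposition either --- it is quoted from \cite{su5}, where the corresponding computation occupies a substantial explicit case analysis --- so a blind reconstruction must either carry out that analysis or it proves nothing.

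Two further cautions about the route you sketch. First, your linear-independence step asks for a full dual basis of \emph{primitives} $q_{s,j}$ with $\langle a_{s,j}, q_{s,k}\rangle = \delta_{jk}$; this exists abstractly because the pairing $P((P_4)^*_n)\times (QP_4)_n \to \mathbb F_2$ is perfect, but constructing and verifying $85$--$89$ explicit primitives (each requiring that every positive-degree dual Steenrod operation kill it) is substantially harder than the standard argument in \cite{su5}, which proves independence by restricting to fewer variables and inducting with a weight filtration; as written, your step 3 replaces the hard part of the problem with an even harder one. Second, the claim that the counts stabilise at $85$ for $s\geqslant 5$ "once all nonzero exponents are large enough" is exactly the kind of assertion that fails in low degrees for non-obvious reasons (witness $89 > 85$ at $s=4$), so it cannot be waved through; it needs the explicit generic basis and a verification that no extra admissibles or extra relations appear for large $s$.
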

A basis of $(QP_4)_n$ has been given in \cite{su5}. For $s \geqslant k-2$, we set
$$\eta_{k,s} = \sum_{m=1}^{k-1}\sum_{1 \leqslant i_1< \ldots <i_m \leqslant k}x_{i_1}x_{i_2}^2\ldots x_{i_{m-1}}^{2^{m-2}}x_{i_m}^{2^{s+1}-2^{m-1}}\in (P_k)_{2^{s+1}-1}.$$
For $k = 4$, we denote 
$$\bar p_{4,s} = \begin{cases} \eta_{4,s} + x_1x_2^2x_3^2x_4^2, &\text{if } s =2,\\ 
\eta_{4,s} +  x_1x_2^2x_3^4x_4^{2^{s+1}-8},  &\text{if } s \geqslant 3.\end{cases}
$$
By a computation similar to the one in Proposition \ref{mdd42}, one gets the following.
\begin{props}\label{mdd431} Let $s$ be a positive integer. Then,
$$(QP_4)_{2^{s+1}-1}^{GL_4} = \begin{cases} 0, &\text{if } s =1,\\ \langle [\bar p_{4,s}]\rangle ,  &\text{if } s \geqslant 2.\end{cases}
$$
\end{props}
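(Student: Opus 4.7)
My plan is to mimic the template the author already used for Propositions \ref{mdd411} and \ref{mdd412}: start from an explicit admissible basis of $(QP_4)_{2^{s+1}-1}$ provided by \cite{su5}, decompose this space as a direct sum of $\Sigma_4$-submodules indexed by the orbits of basis monomials, compute the $\Sigma_4$-fixed subspace in each summand, and then cut out the $GL_4$-fixed subspace by imposing the single additional relation $\rho_4(f) \equiv f$.

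More concretely, for each $s \geqslant 2$ I would group the listed admissible monomials into the $\Sigma_4$-orbits: the orbit of the ``single-spike'' type $x_j^{2^{s+1}-1}$, orbits of two-variable types $x_i x_j^{2^{s+1}-2}$, orbits of three-variable types $x_i x_j^2 x_\ell^{2^{s+1}-3}$, and (for $s \geqslant 3$) the four-variable types such as $x_1 x_2^2 x_3^4 x_4^{2^{s+1}-8}$ which enter $\eta_{4,s}$, together with any residual ``exotic'' orbits that appear only in low $s$ (compare the extra monomials in $s=2,3,4$ versus $s \geqslant 5$ in Proposition \ref{mdd43}). In each such $\Sigma_4$-summand, solving the linear system $\rho_i(f)\equiv f$ for $i=1,2,3$ on the corresponding coefficients exactly as in Lemma \ref{bd2m2} produces at most a one-parameter family of $\Sigma_4$-invariants, typically the sum of the monomials inside that orbit. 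Summing these orbit-sums gives the candidate space of $\Sigma_4$-invariants in $(QP_4)_{2^{s+1}-1}$, and I expect $\eta_{4,s}$ (plus the correction $x_1 x_2^2 x_3^2 x_4^2$ for $s=2$, respectively $x_1 x_2^2 x_3^4 x_4^{2^{s+1}-8}$ for $s \geqslant 3$) to emerge naturally as one of the summands.

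Next, I would apply $\rho_4$ to the general element $f \equiv \sum_j \gamma_j \bar p_{4,s,j}$ of the $\Sigma_4$-fixed space, reduce the image to the admissible basis, and compare coefficients. This yields a linear system in the $\gamma_j$ whose solution must (by Proposition \ref{mdd431}) have rank exactly one for $s \geqslant 2$ and rank zero for $s = 1$. For $s=1$ the degree is $3$, and a direct one-shot computation on the $14$-dimensional space $(QP_4)_3$ should immediately force all $\gamma_j = 0$. For $s \geqslant 2$ I would verify separately that $\bar p_{4,s}$ actually is $GL_4$-invariant; that is, that $\rho_i(\bar p_{4,s})\equiv \bar p_{4,s}$ for all $i$. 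This is where the definition of $\bar p_{4,s}$ via $\eta_{4,s}$ pays off, because $\eta_{4,s}$ is already $\Sigma_4$-symmetric by construction, so only the $\rho_4$-check is nontrivial, and the extra summand is precisely the correction needed to absorb the $\rho_4$-error of $\eta_{4,s}$ modulo $\mathcal A^+P_4$.

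The main obstacle is the size and bookkeeping of the $\rho_4$-calculation: with $75$, $89$, or $85$ basis elements, expanding $\rho_4(\bar p_{4,s,j})$ and rewriting back in the admissible basis for every orbit sum $\bar p_{4,s,j}$ is a substantial symbolic computation, and the ``small'' cases $s=2,3,4$ require separate treatment because they contain orbits (such as $x_1x_2^2x_3^2x_4^2$ for $s=2$ and $x_1^3x_2^3x_3^5x_4^4$-type monomials for $s=3$) that disappear for $s \geqslant 5$. I would therefore handle $s \geqslant 5$ in full generality, using the uniform list of admissible monomials from \cite{su5}, and then finish the finitely many low-$s$ cases by direct case-by-case computation, each time verifying both that the candidate $\bar p_{4,s}$ is $GL_4$-invariant and that no other $\Sigma_4$-invariant survives $\rho_4$.
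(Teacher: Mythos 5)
Your plan is exactly the computation the paper alludes to: the paper proves this proposition only by citing \cite{su5} and remarking that the argument is ``a computation similar to the one in Proposition \ref{mdd42}'', i.e.\ the $\Sigma_4$-orbit decomposition of the admissible basis, extraction of $\Sigma_4$-invariants via $\rho_1,\rho_2,\rho_3$ as in Lemma \ref{bd2m2}, and the final $\rho_4$-check, with separate treatment of the low values of $s$. Your proposal is correct and takes essentially the same route; it only remains to actually carry out the (admittedly lengthy) reductions to the admissible basis.
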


From Theorem \ref{md51}, we have 
$$\text{Ext}_{\mathcal A}^{4, 2^{s+1}+3}(\mathbb F_2, \mathbb F_2) = \begin{cases} 0, &\text{if } s = 1,\\   \langle h_0^3h_{s+1}\rangle &\text{if } s \geqslant 2.\end{cases}
$$
Denote $\bar q_{4,s} = a_1^{(0)}a_2^{(0)}a_2^{(0)}a_4^{(2^{s+1}-1)} \in P((P_4^*)_{2^{s+1}-1})$, for $s \geqslant 2$. It is easy to see that 
$\langle [\bar p_{4,s}],[\bar q_{4,s}] \rangle =1$
 Hence, we obtain
$$\mathbb F_2{\otimes}_{GL_4} P((P_4)^*_{2^{s+1} -1}) = \begin{cases} 0, &\text{if } s =1,\\ \langle [\bar q_{4,s}]\rangle, &\text{if } s \geqslant 2.\end{cases}$$
By a simple computation, we have $\phi_4(\bar q_{4,s}) = \lambda_0^3\lambda_{2^{s+1}-1}$. Hence, using Theorem \ref{md52}, one gets  
$$Tr_4([\bar q_{4,s}]) = [\phi_4(\bar q_{4,s})] = [\lambda_0^3\lambda_{2^{s+1}-1}] = h_0^3h_{s+1}.$$
Theorem \ref{dlm} is completely proved.

\section*{Acknowledgment} 

This article was written when the author was visiting the Vietnam Institute for Advanced Study in Mathematics (VIASM) from August to November 2017. He would like to thank the VIASM for supporting the visit, convenient working condition and for kind hospitality.

The author would like to express his warmest thanks to the referee for carefully reading the manuscript and giving many criticisms and suggestions, which have led to an improvement of the article's exposition.

\bibliographystyle{amsplain}

\end{document}